\documentclass[a4paper,11pt]{article}
\usepackage{graphicx}
\usepackage[utf8x]{inputenc}
\usepackage{fullpage}
\usepackage{amsmath}
\usepackage{amssymb}
\usepackage{theorem}
\usepackage{stmaryrd}
\usepackage{hyperref}

\newtheorem{theorem}{Theorem}[section]
\newtheorem{lemma}[theorem]{Lemma}
\newtheorem{proposition}[theorem]{Proposition}

\theorembodyfont{\rm} 
\newtheorem{definition}[theorem]{Definition}
\newtheorem{remark}[theorem]{Remark}

\newenvironment{proof}{\noindent {\it Proof}.}{\hfill$\Box$}

\let\eps\varepsilon

\newcommand{\IN}{{\mathbb N}}
\newcommand{\IR}{{\mathbb R}}
\newcommand{\IQ}{{\mathbb Q}}
\newcommand{\IZ}{{\mathbb Z}}

\newcommand{\CC}{{\cal C}}
\newcommand{\CG}{{\cal G}}
\newcommand{\CI}{{\cal I}}
\newcommand{\CJ}{{\cal J}}
\newcommand{\CL}{{\cal L}}
\newcommand{\CP}{{\cal P}}

\newcommand{\Int}[1]{{\rm Int}\left(#1\right)}

\let\Lbrack\llbracket
\let\Rbrack\rrbracket

\DeclareMathOperator{\Rot}{Rot}

\DeclareMathOperator{\InfX}{\text{\upshape\sffamily\bfseries T}}
\newcommand{\InfG}{\InfX^{\circ}}

\newcommand{\sunny}[1][T]{\ensuremath{\mathcal{S}_1(#1)}}
\newcommand{\TR}{T_{\IR}}
\newcommand{\RotR}{\Rot_{\IR}}

\newcommand{\rhos}[1][]{\rho_{#1}}


\newcommand{\pluscover}[1]{\displaystyle \mathop{\longrightarrow}_{#1}^{+}}

\newcommand{\modi}{({\rm mod\ 1})}

\begin{document}

\title{Rotation set for maps of degree 1 on sun graphs}
\author{Sylvie Ruette}
\date{January 6, 2019}

\maketitle

\begin{abstract}
For a continuous map on a topological graph containing a unique loop
$S$, it is possible to define the degree and,  for a map of degree $1$,
rotation numbers.  It is known that  the set of rotation numbers of
points in $S$ is a compact interval  and for every rational $r$
in this interval there exists a periodic point of rotation number
$r$.  The whole rotation set (i.e. the set of all rotation
numbers) may not be connected and it is not known in general whether
it is closed.

A sun graph is the space consisting in finitely many segments attached by
one of their endpoints to a circle.  We show that, for a map of degree 1
on a sun graph,  the rotation set is closed and has finitely many
connected components. Moreover, for all but finitely many 
rational numbers $r$ in the
rotation set, there exists a  periodic point of rotation number $r$.
\end{abstract}

\section{Introduction}

In \cite{AR}, a rotation theory is developed for continuous self maps
of degree 1 of topological graphs having a unique loop.
A rotation theory is usually developed in the universal covering space by using
the liftings of the maps under consideration. The universal covering of a graph
containing a unique loop is an ``infinite tree modulo 1'' (see
Figure~\ref{fig:sungraph}). It turns out that the rotation
theory on the universal covering of a graph with a unique loop can be easily
extended to the setting of infinite graphs 
that look like the space $\widehat{G}$
on Figure~\ref{fig:hatG}. These spaces are defined in detail in
Section~\ref{subsec:liftedgraphs} 
and called \emph{lifted graphs}. Each lifted graph
$T$ has a subset $\widehat{T}$ homeomorphic to the real line $\IR$ that
corresponds to an ``unwinding'' of a distinguished loop of the original
graph. In the sequel, we identify $\widehat{T}$ with $\IR$.

\begin{figure}[ht]
\centerline{\includegraphics{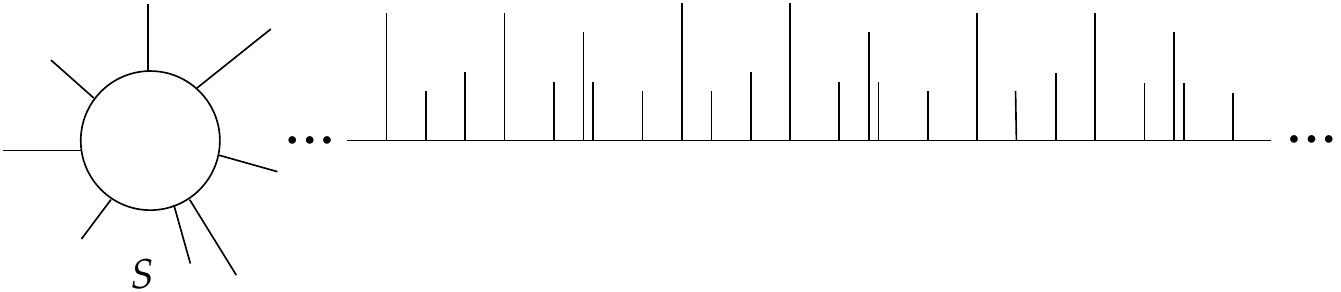}}
\caption{A sun graph on the left, and its universal covering
on the right.\label{fig:sungraph}}
\end{figure}

\begin{figure}[hbt]
\centerline{\includegraphics{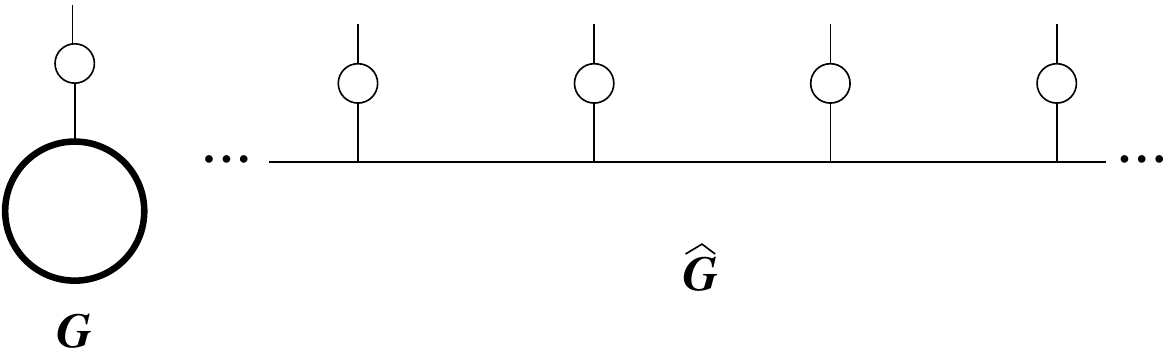}}
\caption{The graph $G$ is unwound with respect to the bold loop
to obtain $\widehat{G}$, which is a lifted graph. \label{fig:hatG}}
\end{figure}

Given a lifted graph $T$ and a map $F$ from $T$ to itself of degree one, there
is no difficulty to extend the definition of rotation number to this setting in
such a way that every periodic point has a rational rotation number as in
the circle case. However, the obtained rotation set $\Rot(F)$ may not be
connected (see \cite[Example~1.12]{AR}). Despite of this fact, it is proved in \cite{AR} that the
set $\Rot_{\IR}(F)$ corresponding to the rotation numbers of all points
belonging to $\IR$, has properties which are similar to (although weaker than)
those of the rotation interval for a circle map of degree one. Indeed, this set
is a compact non empty interval,
if $p/q\in \Rot_{\IR}(F)$ then there exists a periodic point of rotation number
$p/q$, and if $p/q\in \Int{\Rot_{\IR}(F)}$ then, for all large enough positive
integers $n$, there exists a periodic point of period $nq$ of rotation number
$p/q$.  

We conjecture that the whole rotation set $\Rot(F)$ is closed.
In this paper, we prove that, when the
space $T$ is the universal covering of a sun graph  (consisting in finitely
many disjoint segments attached by one of their endpoints to a circle,
see Figure~\ref{fig:sungraph}), then the rotation set is the union of 
finitely many compact intervals. Moreover, all but finitely many
rational points $r$ in $\Rot(F)$ are rotation numbers of periodic
{\modi} points. It turns out
that the proofs extend to a class of maps on graphs that we call
sun-like maps, which are defined in Section~\ref{subsec:hairymaps}.

This paper is the sequel of \cite{R9}, which deals with the graph $\sigma$,
i.e., a sun graph with a unique branch.
The results obtained for $\sigma$ in \cite{R9} are stronger but 
the methods cannot be generalised to sun graphs. Here the main tool
is the construction of
a countable oriented graph, and the symbolic dynamics on this graph
reflects enough of the dynamics of the original map to
compute rotation numbers and find periodic points. The idea is
inspired by the Markov diagram introduced by Hofbauer \cite{Hof} to study
piecewise monotone interval maps, although the goals are very different 
(the Markov diagram was used to study measures of maximal entropy).

The paper is organised as follows.
In Section 2, we give the definitions of the objects we deal with: 
lifted graphs, maps of degree 1, sun graphs and sun-like maps,
rotation numbers and rotation sets; we also recall the main results on
the rotation set of $\IR\subset T$ when $T$ is a lifted graph.
In Section 3, we recall the notion of positive covering,
which is a key tool to find periodic points.
In Section 4, we define a partition $\CP$ of the branches of $T$ 
(where $T$ is the universal covering of a sun graph)
according to  some dynamical properties 
and we state that the rotation number of a point can be computed using 
its itinerary according to the partition $\CP$.
Then, in Section 5, we define the covering graph $\CG$
associated to this partition $\CP$ ($\CG$ is a countable oriented graph), 
which gives a relation between 
itineraries of points and infinite paths in this graph, and we study the 
structure of the graph. Finally, in Section 6, we study the
rotation set of the covering graph
and, in Section 7, we pull back these results on the space
$T$ and we prove the main result about the rotation set of sun-like maps.

\section{Definitions and first properties}
\subsection{Lifted graphs}\label{subsec:liftedgraphs}
A \emph{topological finite graph} is a compact connected set $G$ containing a
finite subset $V$ such that each connected component of $G\setminus V$
is homeomorphic to an open interval. 
The aim of this section is to define in detail the class of \emph{lifted 
graphs} where we develop the rotation theory. They are obtained from 
a topological finite graph by unwinding one of its loops. This gives a new 
space that contains a subset homeomorphic to the real line and that is 
``invariant by a translation'' (see Figures \ref{fig:sungraph} and 
\ref{fig:hatG}). In \cite{AR}, a larger class of spaces called \emph{lifted
spaces} is defined.

\begin{definition}
Let $T$ be a connected topological space. We say that $T$ is a
\emph{lifted graph} if there 
exist a homeomorphism $h\colon\IR\to h(\IR)\subset T$,
and a homeomorphism $\tau\colon T\to T$ such that
\begin{enumerate}
\item $\tau(h(x)) = h(x+1)$ for all $x\in\IR$,
\item the closure of each connected component of  $T \setminus h(\IR)$ is a
topological finite graph that intersects $h(\IR)$ at a single point,
\item the number of connected components $C$ of $T \setminus h(\IR)$ such that
$\overline{C} \cap h([0,1])\neq \emptyset$ is finite.
\end{enumerate}
The class of all lifted graphs will be denoted by $\InfG$.
\end{definition}

To simplify the notation, in the rest of the paper we identify $h(\IR)$
with $\IR$ itself. In this setting, the map
$\tau$ can be interpreted as a translation by 1. So, for all $x \in T$, we
write $x+1$ to denote $\tau(x)$. Since $\tau$ is a homeomorphism, this notation
can be extended by denoting $\tau^m(x)$ by $x+m$ for all $m \in \IZ$.

We endow a lifted graph $T$ with 
a distance $d$ invariant by the translation $\tau$, i.e., 
$\forall x,y\in T$, $d(x+1,y+1)=d(x,y)$.

A loop is a subset homeomorphic to a circle.
If $G$ is a topological finite graph with a unique loop, 
then its universal covering is an infinite tree (i.e., it has no 
loop) and belongs to $\InfG$. Figure~\ref{fig:sungraph} illustrates this
situation. Because of (ii) in the previous definition, 
if the topological finite 
graph $G$ has several loops, the infinite graph obtained by unwinding a 
distinguished loop may or may not be a lifted graph. The essential property of
the class $\InfG$ is the existence of a natural retraction from $T$ to $\IR$.

\begin{definition}
Let $T\in \InfG$. The retraction $r_{\IR}\colon T\to\IR$ is the continuous map 
defined as follows. When $x\in\IR$, then $r_{\IR}(x) := x$. When
$x \notin \IR$, there exists a connected component $C$ of
$T \setminus \IR$ such that $x\in C$ and $\overline{C}$ intersects $\IR$ at a
single point $z$, and we let $r_{\IR}(x):=z$.
\end{definition}

\subsection{Maps of degree 1 and rotation numbers}

A standard approach to study the periodic points and orbits of a graph
map is to work at lifting level with the periodic {\modi} points.
The results on the lifted graph can obviously been pulled back to the original
graph (see \cite{AR}).
Moreover, the rotation numbers have a signification only for maps of degree 1,
as in the case of circle maps (see, e.g., \cite{ALM} for the rotation theory
for circle maps).
In this paper, we deal only with maps of degree 1 on lifted graphs.

\begin{definition}
Let $T \in \InfG$. A continuous map $F\colon T\to T$ is of degree $1$ if
$F(x+1)=F(x)+1$ for all $x\in T$.

A point $x\in T$ is called \emph{periodic} {\modi} for $F$ if there exists
a positive integer $n$ 
such that $F^n(x)\in x+\IZ$. The \emph{period} of $x$ is the
least integer $n$ satisfying this property.
\end{definition}

\begin{definition}
Let $T\in\InfG$, $F\colon T\to T$ a continuous map of degree 1
and $x\in T$. When the limit exists, the \emph{rotation number} of $x$
is
\[
\rho_F(x):=\lim\frac{r_{\IR}\circ F^n (x)-r_{\IR}(x)}{n}.
\]
\end{definition}

The next, easy lemma states that two points in the same orbit have the same
rotation number, as well as two point equal {\modi}.

\begin{lemma}\label{lem:first-properties}
Let $T\in\InfG$, $F\colon T\to T$ a continuous map of degree 1,
and $x\in T$ such that $\rho_F(x)$ exists.
\begin{enumerate}
\item $\forall k\in\IZ$, $\rho_F(x+k)=\rho_F(x)$.
\item $\forall n\geq 1$, $\rho_{F}(F^n(x))=\rho_F(x)$.
\end{enumerate}
\end{lemma}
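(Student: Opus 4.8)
The plan is to reduce everything to two equivariance properties and then a direct computation with the defining limit. The first property is that a degree-one map satisfies $F(y+k)=F(y)+k$ for every $y\in T$ and $k\in\IZ$; this is immediate from the definition together with the fact that $\tau^k$ is a homeomorphism, and it iterates to $F^n(y+k)=F^n(y)+k$. The second property, which deserves a line of justification, is that the retraction commutes with the translation: $r_{\IR}(y+1)=r_{\IR}(y)+1$ for all $y\in T$. Indeed, for $y\in\IR$ this is the definition of $\tau$ on $\IR$; and if $y\notin\IR$, then $y$ lies in some connected component $C$ of $T\setminus\IR$ with $\overline{C}\cap\IR=\{z\}$, and since $\tau$ is a homeomorphism with $\tau(\IR)=\IR$ it carries $C$ onto a component $\tau(C)$ of $T\setminus\IR$ with $\overline{\tau(C)}\cap\IR=\{z+1\}$, so $r_{\IR}(y+1)=z+1=r_{\IR}(y)+1$; iterating, $r_{\IR}(y+k)=r_{\IR}(y)+k$ for all $k\in\IZ$.

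For part (i), substitute $x+k$ in the definition of the rotation number. Using the two equivariance properties,
\[
r_{\IR}\bigl(F^n(x+k)\bigr)-r_{\IR}(x+k)=r_{\IR}\bigl(F^n(x)+k\bigr)-\bigl(r_{\IR}(x)+k\bigr)=r_{\IR}\bigl(F^n(x)\bigr)-r_{\IR}(x),
\]
so the sequences defining $\rho_F(x+k)$ and $\rho_F(x)$ agree term by term; hence the limit exists and $\rho_F(x+k)=\rho_F(x)$.

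For part (ii), it suffices by induction on $n$ to treat the case $n=1$, i.e.\ to show $\rho_F(F(x))=\rho_F(x)$. For the sequence defining $\rho_F(F(x))$ write
\[
\frac{r_{\IR}\bigl(F^n(F(x))\bigr)-r_{\IR}(F(x))}{n}=\frac{n+1}{n}\cdot\frac{r_{\IR}\bigl(F^{n+1}(x)\bigr)-r_{\IR}(x)}{n+1}-\frac{r_{\IR}(F(x))-r_{\IR}(x)}{n}.
\]
As $n\to\infty$, the first term on the right tends to $\rho_F(x)$ (the second factor converges to $\rho_F(x)$ by hypothesis and $(n+1)/n\to 1$) and the second term tends to $0$; hence $\rho_F(F(x))$ exists and equals $\rho_F(x)$. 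Iterating, $\rho_F(F^n(x))=\rho_F(F^{n-1}(x))=\cdots=\rho_F(x)$. I do not expect any real obstacle here: the statement is a routine consequence of equivariance, and the only point needing mild care is the observation that $\tau$ permutes the connected components of $T\setminus\IR$, so that $r_{\IR}$ commutes with the translation — a fact that is used throughout the rest of the paper in any case.
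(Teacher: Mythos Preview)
Your proof is correct. The paper does not actually give a proof of this lemma: it is introduced as ``the next, easy lemma'' and stated without argument. Your write-up supplies precisely the routine verification the paper omits, including the one point that merits a sentence of justification --- that $r_{\IR}$ commutes with the translation $\tau$ because $\tau$ permutes the connected components of $T\setminus\IR$. Nothing is missing.
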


\begin{remark}
If $F^q(x)=x+p$ with $q\in\IN$ and $p\in\IZ$, then $\rho_F(x)=p/q$. Therefore
all periodic {\modi} points have rational rotation numbers.
\end{remark}

An important object that synthesises the information about rotation numbers
is the \emph{rotation set}. 

\begin{definition}
Let $T\in\InfG$ and $F\colon T\to T$ a continuous map of degree 1.
For  $E\subset T$, the \emph{rotation set} of $E$ is:
\[
\Rot_E(F)   := \{\rho_F(x)\mid x\in E\text{ and }\rho_F(x)\text{ exists}\}.
\]
When $E=T$, we omit the subscript and we write 
$\Rot(F)$ instead of $\Rot_T(F)$.
\end{definition}

We define 
\[
\TR:=\overline{\bigcup_{n\geq 0}F^n(\IR)}.
\]
The next theorem summarises the properties of $\TR$ and $\Rot_{\IR}(F)$
(see Lemma 5.2 and Theorems 3.1, 5.7, 5.18 in \cite{AR}).

\begin{theorem}\label{theo:RotR}
Let $T\in \InfG$ and $F\colon T\to T$ a continuous map of degree $1$. 
Then $\TR\in \InfG$, $\Rot_{\TR}(F)=\Rot_{\IR}(F)$ and the set
$\Rot_{\IR}(F)$ is a non empty compact interval. Moreover, if $r\in 
\Rot_{\IR}(F)\cap\IQ$, then there exists a periodic {\modi} point
$x\in \TR$ such that $\rhos(x)=r$.
\end{theorem}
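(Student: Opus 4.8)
The statement gathers four assertions, all carried out in \cite{AR} (Lemma 5.2 and Theorems 3.1, 5.7, 5.18); I indicate how I would establish them in turn. \emph{First}, that $\TR\in\InfG$. The set $\TR$ is closed by construction, and it is $\tau$-invariant because $F$ commutes with $\tau$ and $\tau(\IR)=\IR$, so $\tau(F^n(\IR))=F^n(\IR)$ for every $n$ and $\tau$ preserves closures. Each $F^n(\IR)$ is connected, being a continuous image of $\IR$; moreover $F(\IR)\cap\IR\neq\emptyset$, since the map $r_{\IR}\circ F|_{\IR}\colon\IR\to\IR$ is continuous and commutes with the translation (using $r_{\IR}(y+1)=r_{\IR}(y)+1$), hence is surjective, whereas if $F(\IR)$ avoided $\IR$ it would lie in a single connected component of $T\setminus\IR$ and $r_{\IR}(F(\IR))$ would be a single point. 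Iterating, $F^{n+1}(\IR)\cap F^{n}(\IR)\neq\emptyset$, so $\bigcup_{n\ge 0}F^{n}(\IR)$ is a chain of overlapping connected sets and $\TR$ is connected. Finiteness of the branches meeting a fundamental domain is inherited from $T$. The genuine difficulty is axiom~(ii) of the definition of a lifted graph: that the closure of each connected component of $\TR\setminus\IR$ is a finite topological graph meeting $\IR$ at one point. A closed subset of a finite graph need not be a finite graph, so one must use that $\TR\cap\overline{C}$, for $C$ a component of $T\setminus\IR$, is assembled from the curves $F^{n}(\IR)$, which are themselves graph-like; this is \cite[Lemma 5.2]{AR} and I expect it to be the main obstacle.

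\emph{Second}, that $\Rot_{\TR}(F)=\Rot_{\IR}(F)$ is a non-empty compact interval. Note that $\TR$ is $F$-invariant, since $F(\bigcup_{n\ge 0}F^{n}(\IR))=\bigcup_{n\ge 1}F^{n}(\IR)\subseteq\TR$. The heart of the matter is to reduce the horizontal displacement along $\IR$ to a one-dimensional dynamics. One-step displacement $r_{\IR}(F(x))-r_{\IR}(x)$ is bounded by compactness of a fundamental domain of $T$, so $\rho^{-}:=\inf$ and $\rho^{+}:=\sup$ of $\limsup_{n}\big(r_{\IR}F^{n}(x)-r_{\IR}(x)\big)/n$ over $x\in\TR$ are finite. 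I would then (a) realise $\rho^{\pm}$ as rotation numbers of orbits obtained as limits of displacement-extremal orbit segments, a compactness argument that also yields non-emptiness; (b) realise every $r$ strictly between $\rho^{-}$ and $\rho^{+}$ by shadowing a pseudo-orbit built from segments of slope near $\rho^{-}$ and near $\rho^{+}$ in the right proportion, so that $\Rot_{\TR}(F)=[\rho^{-},\rho^{+}]$; (c) observe that the extreme values $\rho^{\pm}$ are already approached along orbits issued from $\IR$, because $\TR$ is generated from $\IR$ under $F$ and the rotation number is $F$-invariant (Lemma~\ref{lem:first-properties}(ii)); hence $\Rot_{\IR}(F)$ has the same two endpoints and, being an interval, coincides with $[\rho^{-},\rho^{+}]=\Rot_{\TR}(F)$.

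\emph{Third}, rational realisation. Given $r=p/q\in\Rot_{\IR}(F)$, I would invoke the positive-covering tool recalled in Section~3: from an orbit of rotation number $p/q$ one extracts a finite cyclic chain of arcs that positively covers itself around with total horizontal displacement $p$ over $q$ steps, and a Poincar\'e-type fixed-point argument along this loop produces a point $x\in\TR$ with $F^{q}(x)=x+p$, whence $\rho_F(x)=p/q$ by the Remark following Lemma~\ref{lem:first-properties}. Of the four assertions, only the proof that $\TR$ is again a lifted graph really uses the particular geometry of lifted graphs; the rest is soft dynamics on the one-dimensional reduction.
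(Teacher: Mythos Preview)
The paper does not prove this theorem at all: the sentence immediately preceding it says that it ``summarises the properties of $\TR$ and $\Rot_{\IR}(F)$ (see Lemma 5.2 and Theorems 3.1, 5.7, 5.18 in \cite{AR})'', and no argument is given. Your proposal correctly identifies this and goes further by sketching how the cited results might be established; in that sense you have done strictly more than the paper, and your outline is broadly in the spirit of \cite{AR}.

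Two points in your sketch would need tightening if you actually wanted a proof rather than an outline. In part (c) you conclude that $\Rot_{\IR}(F)$ coincides with $[\rho^-,\rho^+]$ by saying it ``is an interval'', but you have not yet argued this for $\Rot_{\IR}(F)$ itself---only for $\Rot_{\TR}(F)$ via steps (a)--(b), and $\IR$ is not $F$-invariant, so the shadowing argument does not transfer automatically. In \cite{AR} the order is the reverse: Theorem~3.1 first shows directly that $\Rot_{\IR}(F)$ is a compact interval (working with the one-dimensional reduction $r_{\IR}\circ F|_{\IR}$, which is a degree-one circle-type map), and only afterwards is the equality $\Rot_{\TR}(F)=\Rot_{\IR}(F)$ obtained. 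Second, for that equality your appeal to ``$\TR$ is generated from $\IR$'' covers points in $\bigcup_n F^n(\IR)$ via Lemma~\ref{lem:first-properties}(ii), but not limit points in the closure; one needs an additional approximation or continuity argument there. These are organizational lacunae rather than wrong ideas.
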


\subsection{Sun graphs and sun-like maps}\label{subsec:hairymaps}

A \emph{sun graph} is a topological finite graph that looks like 
the graph $S$ on Figure~\ref{fig:sungraph}.
It is composed of a circle and finitely many disjoint compact intervals,
each interval being attached by one of its endpoints to the circle. 

Let $T\in\InfG$ and $F\colon T\to T$ a continuous map of degree $1$.
We define 
\[
X:=\overline{T\setminus \TR}\cap r_{\IR}^{-1}([0,1)).
\]
Then $X$ is composed of finitely many finite graphs and $T=\TR\cup(X+\IZ)$.
Note that $\TR$ and $X$ implicitly depend on $F$.

\medskip
If $T$ is the lifting of a sun graph, then $X$ is either empty, or
composed of finitely many disjoint intervals, each intersecting $\TR$
at one of its endpoint. Maps with the same properties will be called
\emph{sun-like maps}.

\begin{definition}
Let $T\in\InfG$ and $F\colon T\to T$ a continuous map of degree $1$.
If $T\setminus \TR\cap r_{\IR}^{-1}([0,1))$ is composed of finitely many 
intervals whose closures are disjoint, we say that $F$ is a
\emph{sun-like} map. The intervals
\[\{\overline{C}\mid C \text{ is a connected component of }
T\setminus \TR\cap r_{\IR}^{-1}([0,1))\}\]
are called the \emph{branches} of $F$ and denoted by $(X^i)_{i\in \Lambda}$,
where $\Lambda$ is some finite set of indices.
The set of all sun-like maps of degree 1 on $T$ is denoted by $\sunny$.  
\end{definition}

\begin{remark}
In a sun graph, two different segments do not meet the circle at the same
point because the segments are compact and disjoint. 
Similarly, two branches of a sun-like map are not allowed to 
have a common endpoint in $\TR$. This property prevents $F$ from oscillating
infinitely many times between two branches. On the contrary, $F$ may
oscillate between a branch and $\TR$.
\end{remark}

\begin{definition}
Let $F$ be a sun-like map and $(X^i)_{i\in\Lambda}$ its branches.
Each branch $X^i$ may be endowed with two
opposite orders. We choose the one such that $\min X^i$ is the
one-point intersection $X^i\cap \TR$. 
\end{definition}

Consider a sun-like map $F$. Because of the 
definition of sun-like maps, all the paths starting in $\TR$ and ending in 
some branch $X^i$ must pass through the one-point intersection $\TR\cap X^i$. 
Thus, if $E$ is a connected set in $T$ containing one point of $\TR$ and 
one point of $X^i$, then $E$ contains $\TR\cap X^i=\{\min X^i\}$. 
We shall use this property several times.

\section{Positive covering}

\begin{definition}
Let $F$ be a sun-like map and $(X^i)_{i\in\Lambda}$ its branches.
For every $i\in\Lambda$, 
the retraction map $r_i\colon T\to X^i$ can be defined in a natural way 
by $r_i(x):=x$ if $x\in X^i$ and $r_i(x):=\min X^i$ otherwise.
\end{definition}

The notion of positive covering for subintervals of $\IR$
has been introduced in \cite{AR}. It can be extended for subintervals of
any subset of $T$ on which a retraction can be defined, as in
\cite{R9}. In this
paper, we shall use positive covering in the branches of $F$.
All properties of positive covering remain valid in this context.
In particular, if a compact interval $I$ positively $F$-covers itself, 
then $F$ has a fixed
point in $I$ (Proposition~\ref{prop:+cover-periodic}).

\begin{definition}
Let $T\in\InfG$, $F\in\sunny$ and $I,J$ two non empty compact
subintervals with $I\subset X^i$ and $J\subset X^j$, where $X^i,X^j$ are two
branches of $F$ ($X^i$ and $X^j$ may be equal). 
Let $n$ be a positive integer and $p\in\IZ$. 
We say that $I$ {\em positively $F^n$-covers} $J+p$
and we write $I\pluscover{F^n} J+p$ if there exist $x,y\in I$
with $x\leq y$ (with respect to the order in $X^i$) such that
$r_j(F^n(x)-p)\leq \min J$ and $\max J\leq r_j(F^n(y)-p)$ (with respect to
the order in $X^j$). In this situation, 
we also say that $I+q$ positively $F^n$-covers $J+p+q$ for all $q\in \IZ$.
\end{definition}

\begin{remark}
If $F^n(x)\in \TR$ and $J\subset X^j$ for some $j$, then the inequality 
$r_j(F^n(x)-p)\leq \min J$ is automatically satisfied. 
We shall often use this remark to prove that
an interval positively covers another.
\end{remark}

The next lemma is \cite[Lemma~2.2(c)]{AR}. It
states that positive coverings can be concatenated.

\begin{lemma}\label{lem:concatenatecovering}
Let $I, J, K$ be non empty compact intervals, each one included
in some branch of $F$. Let $n, m$ be positive integers and $p,q\in\IZ$. If
$I\pluscover{F^n}J+p$ and $J\pluscover{F^m}K+q$, then
$I\pluscover{F^{n+m}}K+p+q$.
\end{lemma}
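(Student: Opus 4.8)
The plan is to run an intermediate-value argument for the first covering, then extract preimages of the witnesses of the second covering in an order-preserving way, and finally compose using the degree one property. Fix witnesses $x\le y$ in $I$ for $I\pluscover{F^n}J+p$, so that $r_j(F^n(x)-p)\le\min J$ and $r_j(F^n(y)-p)\ge\max J$ (orders in $X^j$), and witnesses $u\le v$ in $J$ for $J\pluscover{F^m}K+q$, so that $r_k(F^m(u)-q)\le\min K$ and $r_k(F^m(v)-q)\ge\max K$ (orders in $X^k$). Replacing $I$ by the subinterval $[x,y]$, it suffices to produce $a\le b$ in $[x,y]$ with $r_k(F^{n+m}(a)-p-q)\le\min K$ and $r_k(F^{n+m}(b)-p-q)\ge\max K$.

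\textbf{Step 1: the first covering forces $F^n([x,y])\supseteq J+p$.} Consider the continuous map $g\colon[x,y]\to X^j$, $g(t):=r_j(F^n(t)-p)$. Its image is a connected subset of the interval $X^j$ containing $g(x)\le\min J$ and $g(y)\ge\max J$, hence $g([x,y])\supseteq J$. Put $E:=F^n([x,y])-p$, a compact connected subset of $T$ with $r_j(E)\supseteq J$. Since $r_j$ is the identity on $X^j$ and collapses everything else to the single point $\min X^j$, every $w\in J$ with $w\ne\min X^j$ already lies in $E$; and if $\min X^j\in J$ then $\min X^j$ is a limit of $J\setminus\{\min X^j\}\subseteq E$ and $E$ is closed, so $\min X^j\in E$ too. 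Thus $J\subseteq E$, i.e.\ $F^n([x,y])\supseteq J+p$; in particular $u+p$ and $v+p$ have $F^n$-preimages in $[x,y]$.

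\textbf{Step 2: order-preserving preimages and composition.} Let $b:=\min\{t\in[x,y]\mid g(t)\ge v\}$ and then $a:=\max\{t\in[x,b]\mid g(t)\le u\}$; these sets are closed and nonempty (they contain $y$, resp.\ $x$), so $a$ and $b$ are well defined with $a\le b$, and by continuity $g(a)=u$, $g(b)=v$. One checks $F^n(b)-p=v$: this is immediate when $v\ne\min X^j$ (then $g(b)=v$ already forces $F^n(b)-p\in X^j$), and in the remaining case $u=v=\min X^j=\min J$ one instead uses Step 1 to pick $a,b\in[x,y]$ with $F^n(a)=F^n(b)=u+p$ directly. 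Symmetrically $F^n(a)-p=u$. Now the degree one property gives $F^{n+m}(a)-p-q=F^m(F^n(a))-p-q=F^m(u+p)-p-q=F^m(u)-q$, so $r_k(F^{n+m}(a)-p-q)=r_k(F^m(u)-q)\le\min K$, and likewise $r_k(F^{n+m}(b)-p-q)=r_k(F^m(v)-q)\ge\max K$. Since $a\le b$ in $X^i$, this is exactly $I\pluscover{F^{n+m}}K+p+q$, and the addendum $I+q'\pluscover{F^{n+m}}K+p+q+q'$ follows from translation invariance.

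The one genuinely delicate point is the order-preserving selection in Step 2: one must pick preimages of $u+p$ and $v+p$ respecting the order of $[x,y]$, which is why we locate $b$ first (leftmost point reaching level $v$) and then $a$ to its left. The boundary configurations — a witness equal to $\min X^j$, or a one-point target interval — need only minor extra bookkeeping of the kind indicated above and do not affect the main line of argument.
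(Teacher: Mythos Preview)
The paper gives no proof of its own here: it cites \cite[Lemma~2.2(c)]{AR} (where the statement is for subintervals of $\IR$ with retraction $r_{\IR}$) and asserts that the argument carries over to branches. Your reconstruction is the natural one, and the order-preserving selection of preimages in Step~2 is the right mechanism; it even handles the boundary case $u=\min X^j<v$ correctly, since for $t\in(a,b]$ your definition of $a$ forces $F^n(t)-p\in X^j\setminus\{\min X^j\}$, and continuity then gives $F^n(a)-p\in X^j$, hence $F^n(a)-p=\min X^j=u$.

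The genuine gap is the fully degenerate case $u=v=\min X^j$, i.e.\ $J=\{\min X^j\}$. Your Step~1 claim $F^n([x,y])\supseteq J+p$ is false there: the hypothesis only gives $r_j(F^n(x)-p)=r_j(F^n(y)-p)=\min X^j$, which merely says $F^n(x)-p$ and $F^n(y)-p$ avoid $X^j\setminus\{\min X^j\}$; the set $F^n([x,y])-p$ may miss $\min X^j$ entirely (it could sit in $\TR$ or in another branch). So you cannot ``use Step~1 to pick $a,b$ with $F^n(a)=F^n(b)=u+p$'' as you claim. In fact the lemma as literally stated fails in this configuration: take $I\subset X^1$ with $F(I)\subset X^3\setminus\{\min X^3\}$ and $F^2(I)\subset X^3\setminus\{\min X^3\}$; then $I\pluscover{F}\{\min X^2\}+0$ holds (any $x\in I$ has $F(x)\notin X^2$), and $\{\min X^2\}\pluscover{F}\{\min X^3\}+0$ holds because $\min X^2\in\TR$ and $\TR$ is $F$-invariant, yet no $a\in I$ satisfies $r_3(F^2(a))\le\min X^3$. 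This pathology is absent for $r_{\IR}$ (there is no minimum of $\IR$) and does not arise in the paper's actual applications of the lemma, but your dismissal of the boundary cases as needing ``only minor extra bookkeeping'' is not justified.
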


The next proposition is \cite[Proposition 2.3]{AR}, rewritten in some less 
general form. 

\begin{proposition}\label{prop:+cover-periodic}
Let $T\in\InfG$ and $F\in \sunny$. Let $I_0,\ldots,I_{k-1}$ 
be non empty compact intervals, each one included in some branch of $F$, 
and $p_1,\ldots, p_k\in\IZ$. Suppose that we have
a chain of positive coverings:
\[
I_0\pluscover{F}I_1+p_1\pluscover{F}I_2+p_2
\pluscover{F}\cdots\qquad \cdots
I_{k-1}+p_{k-1}\pluscover{F} I_0+p_k.
\]
Then there exists a point 
$x_0\in I_0$ such that $F^{k}(x_0)=x_0+p_k$ and
$F^i(x_0)\in I_i+p_i$ for all $i\in\Lbrack 1, n-1\Rbrack$.
\end{proposition}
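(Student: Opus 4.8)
The statement to prove is Proposition~\ref{prop:+cover-periodic}, which asserts that a cyclic chain of positive coverings forces the existence of a periodic (mod~1) point threading through the intervals with the prescribed displacements.

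The plan is to reduce everything to the one-step case and then iterate via the concatenation lemma. First I would observe that by Lemma~\ref{lem:concatenatecovering}, the given chain
\[
I_0\pluscover{F}I_1+p_1\pluscover{F}\cdots\pluscover{F}I_0+p_k
\]
can be composed to yield $I_0\pluscover{F^{k}}I_0+p_k$. The core of the argument is therefore a fixed-point statement: if a nonempty compact interval $I\subset X^i$ satisfies $I\pluscover{F^{k}}I+p$, then there is $x_0\in I$ with $F^{k}(x_0)=x_0+p$. This is exactly the content alluded to in the text just before the proposition ("if a compact interval $I$ positively $F$-covers itself, then $F$ has a fixed point in $I$"), applied to the map $G:=F^{k}$ composed with the appropriate translation. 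To prove it I would unfold the definition of positive covering: there exist $x\le y$ in $I$ (in the order of $X^i$) with $r_i(F^{k}(x)-p)\le \min I$ and $\max I\le r_i(F^{k}(y)-p)$. Consider the continuous function $\varphi:=r_i\circ(F^{k}(\cdot)-p):I\to X^i$. On the subinterval $[x,y]\subset I$ we have $\varphi(x)\le\min I\le$ (everything) and $\varphi(y)\ge\max I$, so by the intermediate value theorem on the totally ordered arc $X^i$ there is $x_0\in[x,y]$ with $\varphi(x_0)=x_0$, i.e.\ $r_i(F^{k}(x_0)-p)=x_0$.

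The one subtlety here is that $\varphi(x_0)=x_0$ only gives $r_i(F^{k}(x_0)-p)=x_0$, not $F^{k}(x_0)-p=x_0$ directly; I would need to rule out the possibility that $F^{k}(x_0)-p$ lies off the branch $X^i$ and merely retracts to $x_0$. If $x_0>\min X^i$, then $r_i(F^{k}(x_0)-p)=x_0$ forces $F^{k}(x_0)-p=x_0$, since the only point whose $r_i$-image is $x_0$ is $x_0$ itself when $x_0\neq\min X^i$. If $x_0=\min X^i$, one argues separately: either $F^{k}(x_0)-p=x_0$ and we are done, or $F^{k}(x_0)-p\notin X^i$, in which case I would use the chain structure more carefully — pick the covering data so that $x$ is already mapped into $\TR$ (using the Remark after the definition of positive covering) or adjust the endpoint, and use that $\max I\le r_i(F^k(y)-p)$ with a connectedness argument on $F^k([x_0,y])$, which must contain a genuine point of $X^i$ mapping beyond $\min X^i$; combined with continuity this yields an honest preimage. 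This is the step I expect to be the main obstacle: handling the boundary case $x_0=\min X^i$ cleanly, i.e.\ making sure the intermediate-value argument produces a true solution of $F^k(x_0)=x_0+p$ and not a spurious retraction artefact.

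Finally, having located $x_0\in I_0$ with $F^{k}(x_0)=x_0+p_k$, I would recover the intermediate conditions $F^i(x_0)\in I_i+p_i$. This does not follow automatically from the fixed point of the composed map, so instead I would prove the proposition by a direct induction on $k$ rather than by brute composition: one shows that the set of $x\in I_0$ whose forward orbit respects the chain up to step $j$ (i.e.\ $F^\ell(x)\in I_\ell+p_\ell$ for $\ell\le j$ in the appropriate retracted sense) is a nonempty compact subinterval that still positively covers the tail of the chain, so the hypothesis of the proposition is preserved when we pass from the chain of length $k$ to the chain of length $k-j$ starting from this smaller interval. Iterating down to $j=k$ and then applying the one-step fixed-point statement to the final interval gives $x_0$ with all the required membership properties. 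Throughout, I would lean on the fact — emphasised after the definition of sun-like maps — that any connected set meeting both $\TR$ and a branch $X^i$ must contain $\min X^i$, which is what makes the retractions $r_i$ behave well under taking images of intervals.
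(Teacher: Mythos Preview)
The paper does not prove this proposition at all: it is imported from \cite[Proposition~2.3]{AR} (``rewritten in some less general form''), so there is no argument in the present text to compare your plan against.

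Your outline is the standard one and is viable, but two points deserve tightening. First, the boundary issue you isolate is genuine: with the retraction $r_i$ built into the definition, a fixed point of $\varphi=r_i\circ(F^k(\cdot)-p)$ at $\min X^i$ need not satisfy $F^k(x_0)=x_0+p$, and your suggested fixes (``adjust the endpoint'', ``use the chain structure more carefully'') are not yet a proof. The usual remedy is to carry the retraction through the whole nested-interval construction, choosing at each stage a \emph{sub}interval of the current $I_j$ whose endpoints are sent by $F$ exactly to the endpoints of $I_{j+1}+p_{j+1}$ (the left one possibly into $\TR$, which is harmless by the Remark you cite); then the final intersection produces a point whose actual images, not just their retractions, lie where required. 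Second, in your last paragraph the set of $x\in I_0$ whose forward orbit respects the chain up to step $j$ is in general \emph{not} an interval, since no monotonicity of $F$ is assumed; the correct inductive object is a chosen subinterval witnessing the positive covering, not the full preimage.
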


\section{Partition of the branches and itineraries}

Since $\TR$ is $F$-invariant, if $F^{n_0}(x)\in\TR$ for some $n_0\ge 0$, 
then $F^n(x)\in\TR$ for all $n\geq n_0$ and $\rho_F(x)\in\Rot_{\TR}(F)$. 
The properties of the rotation set $\Rot_{\TR}(F)$ has been
recalled in Theorem~\ref{theo:RotR}. 
Consequently, it remains to consider the points whose
orbits do not fall in $\TR$. Since $T=\TR\cup(X+\IZ)$ and $\rho_F(x+1)=
\rho_F(x)$ (Lemma~\ref{lem:first-properties}(i)), we have
\begin{equation}\label{eq:decomposition-RotF}
\Rot(F)=\Rot_{\TR}(F)\cup\Rot_{X^\infty}(F), \text{ where }
X^{\infty}:=\{x\in X\mid \forall n\geq 0, F^n(x)\in X+\IZ\}.
\end{equation}

Our first step consists in dividing each branch of $X$ according to
the location of the images in one of the sets $X^i+p, i\in\Lambda, p\in\IZ$.
 
\begin{lemma}\label{lem:basic-partition}
Let $T\in\InfG$, $F\in\sunny$ and $(X^i)_{i\in\Lambda}$ the branches of $F$. 
For every branch $X^i$, there exist an 
integer $N_i\geq 0$, disjoint non empty compact intervals 
$X_1^i,\ldots, X_{N_i}^i\subset X^i$
and, for every $j\in\Lbrack 1,N_i\Rbrack$, there exist
$\ell(X_j^i)\in \Lambda$ and $p(X_j^i)\in\IZ$ such that
\begin{enumerate}
\item 
$X_1^i<X_2^i<\cdots<X_{N_i}^i$ (with respect to the order in $X^i$),
\item
$F(X_j^i)\subset \left(X^{\ell(X_j^i)}+p(X_j^i)\right)\cup \Int{\TR}$,
\item
$F(\min X_j^i)=\min X^{\ell(X_j^i)}+p(X_j^i)\in\TR$,
\item 
$\displaystyle F\left(X\setminus \bigcup_{i\in \Lambda, j\in\Lbrack 1,N_i
\Rbrack}X_j^i\right)\cap (X+\IZ)=\emptyset$.
\end{enumerate}
\end{lemma}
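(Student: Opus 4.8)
The plan is to fix a branch $X^i$, understand how $F$ moves points of $X^i$ relative to the branch‑translates, and then carve $X^i$ into finitely many pieces by a compactness argument followed by some bookkeeping. Throughout write $Y_{m,q}:=X^m+q$ ($m\in\Lambda$, $q\in\IZ$) for the branch‑translates, $v_{m,q}:=\min X^m+q\in\TR$ for their attaching points, and $Q_{m,q}:=Y_{m,q}\cup\Int{\TR}$. The sun‑like hypothesis gives the following local picture of $T$, which is the heart of the matter: the $Y_{m,q}$ are pairwise disjoint compact intervals, each meeting $\TR$ only at $v_{m,q}$; the family $\{Y_{m,q}\}_{m\in\Lambda,q\in\IZ}$ is locally finite (finitely many $m$, and $r_{\IR}(Y_{m,q})=\{r_{\IR}(\min X^m)+q\}$ runs off to infinity with $|q|$); hence $T$ is partitioned as $T=\Int{\TR}\ \sqcup\ \bigsqcup_{m,q}(Y_{m,q}\setminus\{v_{m,q}\})\ \sqcup\ \{v_{m,q}\mid m,q\}$, each set $Y_{m,q}\setminus\{v_{m,q}\}$ is open in $T$, $\TR\setminus\Int{\TR}=\{v_{m,q}\mid m,q\}$, and each $v_{m,q}$ has a neighbourhood meeting no other branch‑translate and no other attaching point, hence contained in $Q_{m,q}$. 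In particular $Q_{m,q}$ is open in $T$ and $Q_{m,q}\cap(X+\IZ)=Y_{m,q}$.

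Fix $i$ and set $D^i:=(F|_{X^i})^{-1}(X+\IZ)$, a compact subset of $X^i$. If $F(X^i)\cap(X+\IZ)=\emptyset$ we take $N_i=0$. Otherwise $F(X^i)$, being compact, meets only finitely many branch‑translates, say those indexed by a nonempty finite set $R$, and by the partition above $F(X^i)\subseteq\Int{\TR}\cup\bigcup_{(m,q)\in R}Y_{m,q}$; hence $\{(F|_{X^i})^{-1}(Q_{m,q})\mid(m,q)\in R\}$ is a finite open cover of $X^i$. Using a Lebesgue number of this cover I subdivide $X^i$ into finitely many closed subintervals $\min X^i=s_0<s_1<\dots<s_L=\max X^i$, each contained in one member of the cover, and for each $n$ choose a label $\nu_n\in R$ with $F([s_{n-1},s_n])\subseteq Q_{\nu_n}$. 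Merging maximal runs of consecutive subintervals carrying equal labels produces intervals $J_1<J_2<\dots<J_q$ with $F(J_r)\subseteq Q_{\nu'_r}$ for a label $\nu'_r\in R$, with $\nu'_r\neq\nu'_{r+1}$ and $J_r\cap J_{r+1}$ a single point, and $\bigcup_r J_r=X^i$. Finally, for each $r$ with $D^i\cap J_r\neq\emptyset$ let $X_r^i$ be the convex hull of $D^i\cap J_r$ and, writing $\nu'_r=(m,q)$, set $\ell(X_r^i):=m$, $p(X_r^i):=q$; after reindexing from left to right these intervals are the desired $X_1^i<\dots<X_{N_i}^i$.

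Now I verify (i)--(iv). Ordering (i) is clear since $X_r^i\subseteq J_r$. Property (ii) holds because $F(X_r^i)\subseteq F(J_r)\subseteq Q_{\nu'_r}=\bigl(X^{\ell(X_r^i)}+p(X_r^i)\bigr)\cup\Int{\TR}$. Property (iv): a point $x\in X^i$ lying outside all the $X_r^i$ but with $F(x)\in X+\IZ$ lies in $D^i\cap J_r$ for some $r$ with $x\in J_r$, hence in $X_r^i$, a contradiction. For disjointness: the shared point $s$ of $J_r$ and $J_{r+1}$ cannot lie in both $D^i\cap J_r$ and $D^i\cap J_{r+1}$, since that would force $F(s)\in Y_{\nu'_r}\cap Y_{\nu'_{r+1}}=\emptyset$; hence $\max X_r^i<\min X_{r+1}^i$, and non‑consecutive intervals are disjoint a fortiori. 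The delicate step is (iii), that $F(\min X_r^i)=v_{\nu'_r}$. If $\min X_r^i$ equals the left endpoint $c_r$ of $J_r$, then either $r=1$, so $\min X_r^i=\min X^i\in\TR$ and $F(\min X_r^i)\in Y_{\nu'_r}\cap\TR=\{v_{\nu'_r}\}$; or $r>1$, and then $c_r\in J_{r-1}$ forces $F(c_r)\in Y_{\nu'_r}\cap Q_{\nu'_{r-1}}=\emptyset$ (using $\nu'_{r-1}\neq\nu'_r$), which is absurd, so this subcase does not occur. Otherwise there are points $y\in J_r$ with $y<\min X_r^i$; every such $y$ is outside $D^i$, so $F(y)\in Q_{\nu'_r}\setminus(X+\IZ)=\Int{\TR}$, and letting $y\uparrow\min X_r^i$ and using that $v_{\nu'_r}$ is a non‑isolated point of $\TR$ we get $F(\min X_r^i)\in\overline{\Int{\TR}}\cap Y_{\nu'_r}=\{v_{\nu'_r}\}$. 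In every case $F(\min X_r^i)=v_{\nu'_r}=\min X^{\ell(X_r^i)}+p(X_r^i)\in\TR$, which is (iii).

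The main obstacle is the structural description in the first paragraph — in particular the identification $\TR\setminus\Int{\TR}=\{v_{m,q}\}$ together with the openness of each $Q_{m,q}$. This is exactly where being sun‑like is used: disjointness of the branch closures is what forbids $F$ from oscillating infinitely often between two distinct branch‑translates, and it is what makes the finite open cover of $X^i$ in the second paragraph available. Everything afterwards is a compactness argument plus the bookkeeping above, with (iii) the only point needing care.
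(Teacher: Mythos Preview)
Your proof is correct and takes a genuinely different route from the paper's.

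The paper proceeds by a left-to-right inductive construction: it sets $a_1:=\min\{x\in X^i\mid F(x)\in X+\IZ\}$, reads off the unique $(\ell_1,p_1)$ with $F(a_1)\in X^{\ell_1}+p_1$, then defines $b_1$ as the largest $x$ with $F([a_1,x])\subset (X^{\ell_1}+p_1)\cup\Int{\TR}$ and $F(x)\in X^{\ell_1}+p_1$, and iterates. Finiteness is obtained at the end by a uniform-continuity argument: since distinct branch-translates are at distance $\geq\delta>0$, there is $\eta>0$ with $b_j-a_{j+1}\geq\eta$, so only finitely many intervals fit in $X^i$. Along the way the paper shows $(\ell_{j+1},p_{j+1})\neq(\ell_j,p_j)$, so the resulting partition is the coarsest one compatible with (i)--(iv).

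Your argument instead front-loads the topology: you isolate the key structural fact that each $Q_{m,q}=(X^m+q)\cup\Int{\TR}$ is open (which is exactly where the sun-like hypothesis of disjoint branch closures enters), then use a Lebesgue number to subdivide $X^i$ into finitely many closed subintervals each mapped into a single $Q_{m,q}$, merge equal labels, and take convex hulls of $D^i$ in each piece. Finiteness is immediate from the Lebesgue cover, and (i)--(iv) follow by the bookkeeping you give; the verification of (iii) via $\overline{\Int{\TR}}\cap Y_{\nu'_r}\subseteq\TR\cap Y_{\nu'_r}=\{v_{\nu'_r}\}$ is clean (the non-isolatedness of $v_{\nu'_r}$ is in fact not needed there, since you already know the intersection contains $F(\min X^i_r)$). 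Your partition depends on the choice of labels in the Lebesgue subdivision and need not be maximal---two consecutive $X^i_r$ may carry the same $(\ell,p)$---but the lemma does not require this, and nothing later in the paper uses it. The trade-off is that the paper's construction is canonical and explicit, while yours makes the topological mechanism (openness of the $Q_{m,q}$) transparent and gets finiteness for free.
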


\begin{proof}
We fix $i\in\Lambda$.
If $F(X^i)\cap(X+\IZ)=\emptyset$, we take $N_i=0$ and there is nothing to do.
Otherwise, we can define $a_1:=\min\{x\in X^i\mid F(x)\in X+\IZ\}$
and $p_1\in\IZ$ such that $F(a_1)\in X+p_1$. Since $F$ is a sun-like map,
there is a unique $\ell_1\in\Lambda$ such that $a_1\in X^{\ell_1}+p_1$.
We define
\[
b_1:=\max\{x\in [a_1,\max X^i]\mid F(x)\in X^{\ell_1}+p_1 \text{ and }
F([a_1,x])\subset (X^{\ell_1}+p_1)\cup \Int{\TR}\},
\]
and $X_1^i:=[a_1,b_1]$. 
Then $X_1^i$ satisfies (ii) with $p(X_1^i):=p_1$ and $\ell(X_1^i):=\ell_1$. 
Moreover, $F(\min X^i)\in\TR$
because $\min X^i\in\TR$, which implies that $F([\min X^i,a_1])$ contains
$\min X^{l_1}+p_1$ by connectedness. 
Thus $F(a_1)=\min X^{l_1}+p_1$ by minimality of 
$a_1$, which is (iii) for $X_1^i$.

We define $X_2^i, \ldots X_{N_i}^i$ inductively.
Suppose that $X_j^i=[a_j,b_j]$, $p_j=p(X_j ^i)$ and $\ell_j=\ell(X_j^i)$
are already defined and that $b_j$ satisfies: 
\[
b_j=\max\{x\in [a_j,\max X^i]\mid F(x)\in X^{\ell_j}+p_j\text{ and }
F([a_j,x])\subset (X^{\ell_j}+p_j)\cup \Int{\TR}\}.
\]
If $F((b_j,\max X^i])\subset \Int{\TR}$, we take $N_i:=j$ 
and the construction is
over. Otherwise, we define
\begin{equation}\label{eq:ai}
a_{j+1}:=\inf\{x\in (b_j,\max X^i]\mid F(x)\in X+\IZ\}.
\end{equation}

We first show that $a_{j+1}>b_j$.
By definition, there exists a sequence of points $(x_n)_{n\ge 0}$
in $(b_j,\max X^i]$ such that
$\lim_{n\to+\infty}x_n=a_{j+1}$ and $F(x_n)\in X+\IZ$
for all $n\ge 0$. Since the number of
branches is finite, we may assume (by taking a subsequence if necessary)
that there exists $\ell_{j+1}\in \Lambda$ such that  $F(x_n)\in 
X^{\ell_{j+1}}+\IZ$ for all $n\geq 0$.
Let $m_n\in\IZ$ be
such that $F(x_n)\in X^{\ell_{j+1}}+m_n$.
By continuity, $\lim_{n\to+\infty} r_{\IR}\circ F(x_n)=r_{\IR}\circ F(a_{j+1})$.
Since $r_{\IR}\circ F(x_n)=r_{\IR}(\min X^{\ell_{j+1}})+m_n$, 
this implies that the sequence of integers $(m_n)_{n\geq 0}$ is
ultimately constant, and equal to some integer $p_{j+1}$. Then $F(a_{j+1})
=\lim_{n\to+\infty}F(x_n)\in X^{\ell_{j+1}}+p_{j+1}$. 
By continuity, $F([a_{j+1}, x_n])\subset (X^{\ell_{j+1}}+p_{j+1})\cup 
\Int{\TR}$ for all large enough $n$.
Moreover, $F((b_j,a_{j+1}))\subset \Int{\TR}$ by definition of $a_{j+1}$.
If $p_{j+1}=p_j$ and $\ell_{j+1}=\ell_j$ then, for all large enough $n$,
we would have 
\[
F(x_n)\in X^{\ell_j}+p_j\text{ and }
F([b_j,x_n])\subset (X^{\ell_j}+p_j)\cup \Int{\TR},
\]
which would contradict the definition of $b_j$ because $x_n>b_j$. 
Hence $(p_{j+1},\ell_{j+1})\neq (p_j,\ell_j)$.
This implies that $a_{j+1}>b_j$ and, consequently, $a_{j+1}$ is actually
a minimum in \eqref{eq:ai}.
Since $F((b_i,a_{j+1}))$ is non empty and included in $\Int{\TR}$, necessarily 
$F(a_{j+1})$ is equal to $\min X^{\ell_{j+1}}+p_{j+1}$ by minimality of 
$a_{j+1}$.

Finally, we define
\begin{align*}
&b_{j+1}:=\\
&\max\{x\in [a_{j+1},\max X^i]\mid F(x)\in X^{\ell_{j+1}}+p_{j+1}
\text{ and }
F([a_{j+1},x])\subset (X^{\ell_{j+1}}+p_{j+1})\cup \Int{\TR}\},
\end{align*}
and $X^i_{j+1}:=[a_{j+1},b_{j+1}]$. Then
$X^i_{j+1}>X^i_j$, and (ii) and (iii) are satisfied with 
$p(X^i_{j+1}):=p_{j+1}$ and $\ell(X^i_{j+1}):=\ell_{j+1}$.

Let $\delta$ be the infimum of $d(x,y)$ where $x,y$ 
belong to two different sets of the form $X^\ell+p, \ell\in\Lambda,p\in\IZ$.
This is actually a minimum because the sets $X^\ell$ are compact,
$\Lambda$ is finite and the distance $d$
is invariant by translation by $1$. Moreover, 
$\delta>0$ because the branches are pairwise disjoint.

By uniform continuity of $F$ on the compact set $X$, 
there exists $\eta>0$ such that, if $x,y$ belong to $X$
with $d(x,y)<\eta$, then $|F(x)-F(y)|<\delta$. This
implies that $|a_{j+1}-b_j|\geq\eta$, otherwise $F(a_{j+1})$ and $F(b_j)$
would be in the same set $X^\ell+p$. This ensures that 
for a given $i\in\Lambda$, 
the number of intervals $X_j^i$ is finite, and the construction
ultimately ends. By construction, (iv) is satisfied.
\end{proof}

\begin{remark}
The fact that the sets $X^i_j$ are intervals
is very important because it will allow us to use positive coverings.
In an ideal situation, we would like to define the sets 
$(X^i_j)_{1\leq j\leq N_i}$ as the connected components of
$F^{-1}(X^i)\cap(X+\IZ)$. This is not possible in general because the number
of connected components may be infinite: this occurs when
$F$ oscillates infinitely many times between a branch and $\TR$.
\end{remark}

We call $\CP:=\{X_j^i\mid i\in\Lambda, j\in\Lbrack 1,N_i\Rbrack\}$
the \emph{basic partition} of $X$ (although the true partition of $X$
is $\CP\cup\{X\setminus \bigcup X_j^i\}$). The set
$X\setminus \bigcup X_j^i$, as well as $\TR$, plays the role of
``dustbin'': we do not need to care about points whose orbit falls in 
these sets because their rotation numbers belong to $\RotR(F)$.

According to Lemma~\ref{lem:basic-partition}(iv), every point
$x\in X$ such that $F(x)\in X+\IZ$ belongs to some $A\in\CP$, and this
$A$ is unique because the elements of $\CP$ are pairwise disjoint. This allows
us to code the orbits of the points of $X^\infty$ with respect to 
the partition $\CP$.

\begin{definition}
Let $X^\infty:=\{x\in X\mid \forall n\geq 0, F^n(x)\in X+\IZ\}$.
If $x\in X^\infty$ then, for every $n\geq 0$, there is a unique $A_n\in\CP$
such that $F^n(x)\in A_n+\IZ$. The sequence $(A_n)_{n\geq 0}$ is called
the \emph{itinerary} of $x$. Let $\Sigma\subset \CP^{\IZ^+}$ be the set of
all itineraries of points $x\in X^{\infty}$.
\end{definition}

The next lemma is straightforward.

\begin{lemma}\label{lem:itinerary-rho}
If $(A_n)_{n\geq 0}$ is the itinerary of $x\in X^\infty$, then
\[
\forall n\geq 0,\ F^n(x)\in A_n+p(A_0)+p(A_1)+\cdots+p(A_{n-1}),
\]
and, if $\rho_F(x)$ exists,
\[
\rho_F(x)=\lim_{n\to+\infty}\frac{p(A_0)+\cdots+p(A_{n-1})}{n}.
\]
\end{lemma}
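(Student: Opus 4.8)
The plan is to establish the membership relation by induction on $n$, feeding in Lemma~\ref{lem:basic-partition}(ii) at each step, and then to read off the rotation number formula by applying $r_{\IR}$ and discarding a bounded error term in the Cesàro average.

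For the induction, set $q_n:=p(A_0)+\cdots+p(A_{n-1})$ (with $q_0=0$). The base case $F^0(x)=x\in A_0$ amounts to saying that $x$ lies in $A_0$ itself and not in a nontrivial translate $A_0+k$: this holds because $x\in X\subset r_{\IR}^{-1}([0,1))$, while $r_{\IR}$ is constant on each branch, so $r_{\IR}(A_0+k)\subset[k,k+1)$ and only $k=0$ is compatible. For the inductive step, assume $F^n(x)\in A_n+q_n$ and write $A_n=X^i_j$. Using that $F$ has degree $1$ together with Lemma~\ref{lem:basic-partition}(ii),
\[
F^{n+1}(x)\in F(A_n+q_n)=F(A_n)+q_n\subset\bigl(X^{\ell(A_n)}+p(A_n)+q_n\bigr)\cup\Int{\TR}.
\]
Since $x\in X^\infty$ we have $F^{n+1}(x)\in X+\IZ$, and since each branch meets $\TR$ only at one of its endpoints, no point of $X+\IZ$ is interior to $\TR$, i.e. $(X+\IZ)\cap\Int{\TR}=\emptyset$; hence $F^{n+1}(x)\in X^{\ell(A_n)}+q_{n+1}$, so that $F^{n+1}(x)-q_{n+1}\in X$. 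On the other hand $F^{n+1}(x)\in A_{n+1}+\IZ$ by definition of the itinerary, and again only the zero translate of $A_{n+1}$ meets $X$; therefore $F^{n+1}(x)\in A_{n+1}+q_{n+1}$, closing the induction.

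For the second assertion, apply $r_{\IR}$. Since $r_{\IR}$ is constant on the branch $X^{i}$ containing $A_n$ — with value the attaching point $c_n:=\min X^{i}\in[0,1)$ — and $r_{\IR}\circ\tau^m=r_{\IR}+m$, the first part gives $r_{\IR}\circ F^n(x)=c_n+q_n$ and in particular $r_{\IR}(x)=c_0$. Consequently
\[
\frac{r_{\IR}\circ F^n(x)-r_{\IR}(x)}{n}=\frac{q_n}{n}+\frac{c_n-c_0}{n},
\]
and since $0\le c_n,c_0<1$ the last term tends to $0$; so whenever $\rho_F(x)$ exists, $q_n/n$ converges to the same value, which is exactly the displayed formula.

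There is essentially no real obstacle here: the only points deserving a word of justification are that $r_{\IR}$ is constant on each branch (so that every translate $A+q$ behaves uniformly and the ``dustbin'' set $X\setminus\bigcup X^i_j$ never intrudes) and that $\Int{\TR}$ is disjoint from $X+\IZ$ (which is what forces the image to land in the predicted branch translate rather than escaping into $\TR$); both are immediate from the structure of lifted graphs and the definition of sun-like maps.
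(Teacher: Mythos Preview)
Your proof is correct and is exactly the straightforward argument the paper has in mind (the paper omits the proof entirely, stating only that the lemma ``is straightforward''). One small quibble: in the general sun-like setting, $\min X^i$ is the point where $X^i$ meets $\TR$, not necessarily $\IR$, so it need not be a real number; the correct definition is $c_n:=r_{\IR}(X^i)\in[0,1)$, after which your computation goes through verbatim.
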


\section{The covering graph associated to $\CP$}

Knowing the itinerary of a point $x\in X^\infty$ is enough to compute the 
rotation number of $x$. Therefore we can focus on the set $\Sigma$ of all 
itineraries. If
\[
\forall A,B\in\CP,\ F(A)\cap (B+p(A))\Longrightarrow F(A)\supset B+p(A),
\]
then it can be shown that $\Sigma$ is a Markov shift on the finite alphabet
$\CP$. In this case, the rotation set of $X^\infty$ can be easily computed
by the use of the Markov graph of $\Sigma$. In \cite{Zie} this is done
for transitive subshifts of finite type, and it is shown that in this case
the rotation set is a compact interval. When the Markov shift is not 
transitive, or equivalently when its Markov graph is not strongly connected
(see Definition~\ref{defi:connected} below), one has to look at the
different connected components of the graph, each of which giving an interval.

In general, $\Sigma$ may not be a Markov shift. We are going to build a
countable oriented graph, called the covering graph associated to $\CP$, 
that plays the 
role of the Markov graph: the symbolic itineraries can be read in the
graph and the structure of the graph (in particular its connected 
components) will give the structure of the rotation set.

\subsection{Definitions and first properties}

The construction of the covering graph
is inspired by the Markov diagram of a (non Markov) interval map, first
introduced by Hofbauer for piecewise monotone maps \cite{Hof}. Our
definition is closer to the Buzzi's version of the Markov diagram \cite{Buz},
although the basis of our covering
graph is always finite, as in Hofbauer's graph. In Hofbauer's and Buzzi's
constructions, the basis consists in monotone intervals, 
whereas our basis will be the basic partition $\CP$ (no monotonicity is
involved here).

\begin{definition}
If $A_i\in \CP$ for all $i\in\Lbrack 0,n\Rbrack$, we define
\[
\langle A_0A_1\ldots A_n\rangle:=F^n\left(\{x\in T\mid \forall i\in\Lbrack 0,
n\Rbrack,  F^i(x)\in A_i+\IZ\}\right)\cap X.
\]
\end{definition}

\begin{remark}
If the itinerary of $x$ begins with $A_0\ldots A_n$, then $F^n(x)\in 
\langle A_0\ldots A_n\rangle+\IZ$. In this sense, 
$\langle A_0\ldots A_n\rangle$ is the set of points whose ``past
itinerary'' is $A_0\ldots A_n$.
\end{remark}

The next lemma gives an alternative definition of
$\langle A_0\ldots A_n\rangle$ and states that this set is actually an
interval.

\begin{lemma}\label{lem:subintervalAn}
If $A_i\in \CP$ for all $i\in\Lbrack 0,n\Rbrack$, then
\begin{enumerate}
\item $\begin{array}[t]{rcl}
\langle A_0\ldots A_n\rangle
&=&F^n(A_0+\IZ)\cap F^{n-1}(A_1+\IZ)\cap\cdots \cap F(A_{n-1}+\IZ)\cap A_n\\
&=&F(\langle A_0\ldots A_{n-1}\rangle-p(A_{n-1}))\cap A_n
\end{array}$
\item
$\langle A_0A_1\ldots A_n\rangle$ is either empty, or a closed subinterval 
of $A_n$ containing $\min A_n$.
\end{enumerate}
\end{lemma}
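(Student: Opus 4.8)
The plan is to prove (i) first, since it reduces the definition of $\langle A_0\ldots A_n\rangle$ to a recursive formula, and then deduce (ii) by induction on $n$ using that recursive formula together with the structure of $F$ on a single element of $\CP$ (Lemma~\ref{lem:basic-partition}(ii)--(iii)).

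For part (i), I would start from the definition
\[
\langle A_0\ldots A_n\rangle=F^n\big(\{x\mid \forall i\in\Lbrack 0,n\Rbrack,\ F^i(x)\in A_i+\IZ\}\big)\cap X .
\]
The key observation is that $F$ is of degree $1$, so $F(x)\in A_i+\IZ$ is equivalent to $F(x-k)\in A_i+\IZ$ for the appropriate $k\in\IZ$, and more importantly $F^n$ commutes with translation by integers; hence applying $F^n$ to the set $\{x\mid F^i(x)\in A_i+\IZ\ \forall i\}$ and intersecting with $X$ gives exactly $\bigcap_{i=0}^{n}F^{\,n-i}(A_i+\IZ)\cap X$, and since $\langle A_0\ldots A_n\rangle\subset A_n\subset X$ one may replace the last intersection $F^0(A_n+\IZ)\cap X$ by $A_n$. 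This yields the first equality. For the second equality, one peels off the last step: a point $y$ lies in $\bigcap_{i=0}^{n}F^{\,n-i}(A_i+\IZ)$ iff $y\in F(z)+\IZ$ for some $z\in\bigcap_{i=0}^{n-1}F^{\,(n-1)-i}(A_i+\IZ)$, i.e. iff $y\in F(\langle A_0\ldots A_{n-1}\rangle+\IZ)$ (using the first equality at level $n-1$), and intersecting with $A_n$ and normalising the integer translate via $p(A_{n-1})$ — recall from Lemma~\ref{lem:itinerary-rho} that the relevant shift after reading $A_{n-1}$ is $p(A_{n-1})$ — gives $F(\langle A_0\ldots A_{n-1}\rangle-p(A_{n-1}))\cap A_n$. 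This bookkeeping with the integers $p(\cdot)$ is the only slightly delicate point; it is where the definition of $p(A_j^i)$ in Lemma~\ref{lem:basic-partition} is used.

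For part (ii), I would argue by induction on $n$. For $n=0$, $\langle A_0\rangle=A_0$, which is a closed subinterval of $A_0$ containing $\min A_0$ trivially. For the inductive step, assume $\langle A_0\ldots A_{n-1}\rangle$ is either empty or a closed subinterval of $A_{n-1}$ containing $\min A_{n-1}$. If it is empty, then by the recursive formula in (i) so is $\langle A_0\ldots A_n\rangle$, and we are done. Otherwise write $I:=\langle A_0\ldots A_{n-1}\rangle-p(A_{n-1})$, a closed subinterval of $A_{n-1}$ (translated) containing its minimum $\min A_{n-1}$. Since $A_{n-1}=X_j^i$ for some $i,j$, Lemma~\ref{lem:basic-partition}(ii) gives $F(A_{n-1})\subset (X^{\ell}+p(A_{n-1}))\cup\Int{\TR}$ for the appropriate branch $X^\ell$, hence after subtracting $p(A_{n-1})$ we have $F(I)-p(A_{n-1})\subset X^\ell\cup\Int{\TR}$ — wait, one must instead keep the translate: $F(\langle A_0\ldots A_{n-1}\rangle-p(A_{n-1}))$ need not lie in $A_n$'s branch a priori, which is exactly why we intersect with $A_n$. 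The clean way: $F$ restricted to $I$ is a continuous map from a compact interval into $T$; its image $F(I)$ is a connected subset of $T$ containing $F(\min A_{n-1})$, and by Lemma~\ref{lem:basic-partition}(iii) $F(\min A_{n-1})=\min X^{\ell}+p(A_{n-1})\in\TR$. Now if $\langle A_0\ldots A_n\rangle=F(I)\cap A_n\neq\emptyset$, then $A_n\subset X^{k}$ for some $k$; since $F(I)$ is connected and meets both $\TR$ (at $\min X^\ell+p(A_{n-1})$) and the branch $X^k+p(A_{n-1})$, by the key property stated at the end of Section~2.3 it must contain the one-point intersection $\TR\cap(X^k+p(A_{n-1}))=\{\min X^k+p(A_{n-1})\}=\{\min A_n+\text{(shift)}\}$; after normalising, $\min A_n\in F(I)\cap A_n=\langle A_0\ldots A_n\rangle$. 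This proves the set contains $\min A_n$. Finally, $F(I)\cap A_n$ is the intersection of a connected subset of $T$ with the interval $A_n$, and since $A_n$ is attached to $\TR$ only at $\min A_n$ and $F(I)\cap A_n$ contains $\min A_n$, this intersection is actually an initial segment of $A_n$, hence a closed subinterval containing $\min A_n$. (Closedness follows from compactness of $I$ and continuity of $F$.)

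The main obstacle, as I see it, is not any single deep step but rather keeping the integer translates $p(A_i)$ consistently tracked through the recursion in (i), and, in (ii), being careful that ``$F(I)$ is connected and meets both $\TR$ and a branch'' genuinely forces it to contain the attaching point — this is where the sun-like hypothesis (branches attached at a single point, no oscillation between two distinct branches) is essential, and it is exactly the property emphasised at the end of Section~\ref{subsec:hairymaps}. Everything else is routine point-set topology on the graph $T$.
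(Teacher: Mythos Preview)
Your strategy matches the paper's exactly: derive (i) by unravelling the definition and peeling off the last step via Lemma~\ref{lem:basic-partition}(ii), then prove (ii) by induction using that $F(\langle A_0\ldots A_{n-1}\rangle)$ is connected and that $F(\min A_{n-1})\in\TR$ by Lemma~\ref{lem:basic-partition}(iii).

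There is one genuine slip in your inductive step. You write that ``$A_n$ is attached to $\TR$ only at $\min A_n$'' and identify the one-point intersection $\TR\cap X^k$ with $\min A_n$. In general $A_n=X^\ell_j$ is a proper subinterval of the branch $X^\ell$, and $\min A_n=a_j$ is typically strictly larger than the attaching point $\min X^\ell$ (recall from the construction in Lemma~\ref{lem:basic-partition} that already $a_1=\min\{x\in X^\ell:F(x)\in X+\IZ\}$ need not equal $\min X^\ell$). So the property at the end of Section~\ref{subsec:hairymaps} only gives you $\min X^\ell\in F(I)$, not $\min A_n$. The fix is short and is what the paper's ``by connectedness'' really means: any point $c$ with $\min X^\ell<c\le\max X^\ell$ is a cut point of $T$, separating $(c,\max X^\ell]$ from the rest; since $F(I)$ is connected and contains both $\min X^\ell$ and some $y\in A_n$ with $y\ge\min A_n$, it cannot miss $\min A_n$. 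The same cut-point argument shows $F(I)\cap A_n$ is an interval. (As a side remark, once you set $I=\langle A_0\ldots A_{n-1}\rangle-p(A_{n-1})$ you have $F(\min I)=\min X^\ell$, not $\min X^\ell+p(A_{n-1})$; straightening out the translates here would also remove the ``(shift)'' fudge.)
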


\begin{proof} 
By definition,
\[
\langle A_0\ldots A_n\rangle =F^n\left((A_0+\IZ)\cap F^{-1}(A_1+\IZ)\cap\cdots
\cap F^{-n}(A_n+\IZ)\right)\cap X.
\]
Thus
\[
\langle A_0\ldots A_n\rangle =F^n(A_0+\IZ)\cap F^{n-1}(A_{1}+\IZ)\cap\cdots
\cap F(A_{n-1}+\IZ)\cap (A_n+\IZ)\cap X.
\]
Since $(A_n+\IZ)\cap X=A_n$, this gives the first equality of (i).
If we write this equality for $\langle A_0\ldots A_{n-1}\rangle$,
we see that $\langle A_0\ldots A_n\rangle =F(\langle A_0\ldots A_{n-1}\rangle
+\IZ)\cap A_n$. Since $\langle A_0\ldots A_{n-1}\rangle\subset A_{n-1}$, 
Lemma~\ref{lem:basic-partition}(ii) implies that
$F(\langle A_0\ldots A_{n-1}\rangle)\subset X+p(A_{n-1})$, and hence
$F(\langle A_0\ldots A_{n-1}\rangle+\IZ)\cap A_n=
F(\langle A_0\ldots A_{n-1}\rangle-p(A_{n-1}))\cap A_n$. This is the second
equality of (i).

\medskip
We show (ii) by induction on $n$. If $n=0$, then $\langle A_0\rangle=A_n$
and there is nothing to prove.

Suppose that $\langle A_0\ldots A_n\rangle \neq\emptyset$ and that
$\langle A_0\ldots A_{n-1}\rangle$ is a closed subinterval of $A_{n-1}$
containing $\min A_{n-1}$ (note that $\langle A_0\ldots A_{n-1}\rangle$
is not empty if $\langle A_0\ldots A_n\rangle\neq\emptyset$). By (i),
\[
\langle A_0\ldots A_n\rangle
=(F(\langle A_0\ldots A_{n-1}\rangle)-p(A_{n-1}))\cap A_n.
\] 
By continuity,
$F(\langle A_0\ldots A_{n-1}\rangle)$ is compact and connected, and thus
$(F(\langle A_0\ldots A_{n-1}\rangle)-p(A_{n-1}))\cap A_n$ is a closed
subinterval of $A_n$, which is non empty by assumption. Moreover,
$\langle A_0\ldots A_{n-1}\rangle$ contains $\min A_{n-1}$ by
the induction hypothesis, and $F(\min A_{n-1})\in\TR$ by 
Lemma~\ref{lem:basic-partition}(iii). This implies that the interval
$F(\langle A_0\ldots A_{n-1}\rangle)-p(A_{n-1})$ contains a point of
$\TR$ and a point of $A_n$, and thus it contains
$\min A_n$ by connectedness. Therefore 
(ii) holds for $\langle A_0\ldots A_n\rangle$.
This ends the induction.
\end{proof}

\medskip
We define an equivalence relation between the finite sequences of elements of
$\CP$.

\begin{definition}
Let $A_0,\ldots,A_n,B_0,\ldots,B_m\in\CP$. We set
$A_0\ldots A_n \sim B_0\ldots B_m$
if there is $k\in\Lbrack 0,\min(n,m)\Rbrack$ such that
\begin{equation}\label{eq:def-equiv}
\left\{\begin{array}[c]{l}
A_{n-i}=B_{m-i}\quad\text{for all}\quad i\in\Lbrack 0,k\Rbrack\\
\langle A_0\ldots A_{n-k}\rangle=A_{n-k}=B_{m-k}=
\langle B_0\ldots B_{m-k}\rangle
\end{array}\right.
\end{equation}
\end{definition}

\begin{remark}
It follows from
Lemma \ref{lem:subintervalAn}(i) that
\[
\text{if }A_0\ldots A_n \sim B_0\ldots B_m, \text{ then }
\langle A_0\ldots A_n\rangle=\langle B_0\ldots B_m\rangle.
\]
This means that, although the two sets come from different ``past 
itineraries'', their futures are indistinguishable.
If $\alpha=A_0\ldots A_n/\sim$ is an equivalence class, then 
$\langle \alpha \rangle$ denotes $\langle A_0\ldots A_n\rangle$, which is well
defined according to what precedes.
\end{remark}

The next result follows straightforwardly from Lemma~\ref{lem:subintervalAn}
and the fact that the elements of $\CP$ are disjoint.

\begin{lemma}\label{lem:alpha-subset-A}
If $\alpha=A_0\ldots A_n/\sim$ and $\langle\alpha\rangle\neq\emptyset$, 
then $A_n$ is the unique element $A\in\CP$
such that $\langle\alpha\rangle\subset A$.
\end{lemma}

Now we have all the notations to define the covering graph.

\begin{definition}
We define the oriented graph $\CG$ as follows:
\begin{itemize}
\item the set of vertices is the set of equivalence classes
$\alpha=A_0\ldots A_n/\sim$, where $n\geq 0$, $A_0,\ldots, A_n\in \CP$ and
$\langle A_0\ldots A_n\rangle\neq \emptyset$,
\item if $\alpha,\beta$ are two vertices, there is an arrow $\alpha\to\beta$
iff there exist $A_0,\ldots,A_n,A_{n+1}\in\CP$ such that
$\alpha=A_0\ldots A_n/\sim$ and $\beta=A_0\ldots A_nA_{n+1}/\sim$.
\end{itemize}
$\CG$ is called the \emph{covering graph} associated to $\CP$.
\end{definition}

The next lemma justifies the name ``covering graph''.

\begin{lemma}\label{lem:covering-in-graph}
If $\alpha\to \beta$ in $\CG$ and if $A\in\CP$ is such that 
$\langle\alpha\rangle\subset A$, then $\langle \alpha\rangle\pluscover{F}
\langle \beta\rangle+p(A)$.
\end{lemma}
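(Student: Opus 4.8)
The plan is to unwind the definitions: an arrow $\alpha\to\beta$ in $\CG$ means there are representatives $\alpha=A_0\ldots A_n/\!\sim$ and $\beta=A_0\ldots A_nA_{n+1}/\!\sim$ for suitable $A_0,\ldots,A_{n+1}\in\CP$ with $\langle A_0\ldots A_nA_{n+1}\rangle\neq\emptyset$. By the remark following the definition of $\sim$, we may compute $\langle\alpha\rangle=\langle A_0\ldots A_n\rangle$ and $\langle\beta\rangle=\langle A_0\ldots A_nA_{n+1}\rangle$ using these representatives. By Lemma~\ref{lem:alpha-subset-A}, $A_n$ is the unique element of $\CP$ containing $\langle\alpha\rangle$, so $A=A_n$ and we must show $\langle A_0\ldots A_n\rangle\pluscover{F}\langle A_0\ldots A_nA_{n+1}\rangle+p(A_n)$.

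Next I would invoke Lemma~\ref{lem:subintervalAn}(i), which gives
\[
\langle A_0\ldots A_nA_{n+1}\rangle=F(\langle A_0\ldots A_n\rangle-p(A_n))\cap A_{n+1},
\]
and Lemma~\ref{lem:subintervalAn}(ii), which says both $\langle A_0\ldots A_n\rangle$ (inside $A_n$) and $\langle A_0\ldots A_nA_{n+1}\rangle$ (inside $A_{n+1}$) are closed intervals containing the minimum of their respective branches. Write $I:=\langle A_0\ldots A_n\rangle=[\min A_n, b]$ and $J:=\langle A_0\ldots A_nA_{n+1}\rangle=[\min A_{n+1},c]$, both nonempty. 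To establish $I\pluscover{F}J+p(A_n)$ with $j:=\ell(A_{n+1})$ (the branch index of $A_{n+1}$), I need points $x\le y$ in $I$ with $r_j(F(x)-p(A_n))\le\min J$ and $\max J\le r_j(F(y)-p(A_n))$. For the lower endpoint, take $x=\min A_n=\min I$: by Lemma~\ref{lem:basic-partition}(iii), $F(\min A_n)=\min X^{\ell(A_n)}+p(A_n)\in\TR$, so $F(x)-p(A_n)\in\TR$, and by the Remark after the definition of positive covering, $r_j(F(x)-p(A_n))\le\min J$ automatically.

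For the upper endpoint I want $y\in I$ with $F(y)-p(A_n)=c=\max J$. Since $c\in J=F(I-p(A_n))\cap A_{n+1}$, there is some $y\in I$ with $F(y)-p(A_n)=c\in A_{n+1}=X^j+(\text{some integer shift})$; then $r_j(F(y)-p(A_n))=r_j(c)=c=\max J$, as desired. It remains only to check $x\le y$ in the order of $A_n$: since $x=\min A_n=\min I$, this holds trivially. This gives $I\pluscover{F}J+p(A_n)$, i.e. $\langle\alpha\rangle\pluscover{F}\langle\beta\rangle+p(A)$.

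The only subtle point — and the part I would state most carefully — is the bookkeeping of which branch and which $\IZ$-shift the interval $J$ lives in, so that the retraction $r_j$ acts as the identity on $J$. Because $J\subset A_{n+1}$ and $A_{n+1}$ sits (translated by an integer) inside $X^{\ell(A_{n+1})}$, the retraction $r_{\ell(A_{n+1})}$ composed with the appropriate translation fixes $J$ pointwise, matching the definition of $\pluscover{F}$; I would phrase the computation using the second formula of Lemma~\ref{lem:subintervalAn}(i) directly in terms of $F(I)-p(A_n)$ so that the integer shifts cancel transparently. Everything else is a direct substitution, so I do not expect a real obstacle beyond this indexing care.
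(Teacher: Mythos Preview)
Your proof is correct and follows essentially the same route as the paper: identify $A=A_n$ via Lemma~\ref{lem:alpha-subset-A}, use Lemma~\ref{lem:subintervalAn}(ii) and Lemma~\ref{lem:basic-partition}(iii) to get $F(\min\langle\alpha\rangle)\in\TR$ for the lower end, and use Lemma~\ref{lem:subintervalAn}(i) to find a point of $\langle\alpha\rangle$ mapping onto $\max\langle\beta\rangle+p(A_n)$ for the upper end. One small simplification: your final worry about integer shifts is unnecessary, since every $A_{n+1}\in\CP$ already lies inside some branch $X^{\ell}$ with no translation (the elements of $\CP$ are the $X^i_j\subset X^i$), so $r_{\ell(A_{n+1})}$ is the identity on $J$ directly.
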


\begin{proof}
Let $A_0,\ldots,A_n,A_{n+1}\in\CP$ be such that $\alpha=A_0\ldots A_n/\sim$
and $\beta=A_0\ldots A_nA_{n+1}/\sim$. Necessarily,
$A_n=A$ (Lemma~\ref{lem:alpha-subset-A}). 
According to Lemma~\ref{lem:subintervalAn}(ii), $\langle \alpha
\rangle$ is a closed subinterval of $A$ and $\min \langle \alpha
\rangle=\min A$. Thus Lemma~\ref{lem:basic-partition}(iii) implies that
\begin{equation}\label{eq:Fminalpha}
F(\min \langle \alpha\rangle)\in\TR.
\end{equation}
Moreover, $\langle \beta\rangle=F(\langle\alpha\rangle-p(A))\cap A_{n+1}$
by Lemma~\ref{lem:subintervalAn}(i). Thus
\begin{equation}\label{eq:maxFalpha}
\exists x_0\in \langle\alpha\rangle\text{ such that }
\max \langle \beta\rangle=F(x_0)-p(A).
\end{equation}
Since $\langle \beta\rangle$ is a closed subinterval of $A_{n+1}$, Equations
\eqref{eq:Fminalpha} and \eqref{eq:maxFalpha} imply that 
$\langle \alpha\rangle\pluscover{F} \langle \beta\rangle+p(A)$.
\end{proof}

\begin{definition}
The \emph{significant part} of $A_0\ldots A_n$ is $A_i\ldots A_n$, where
$i\in\Lbrack 0,n\Rbrack$ is the greatest integer such that
$A_0\ldots A_n\sim A_i\ldots A_n$.
If $\alpha$ is the equivalence class of $A_0\ldots A_n$, the significant
part of $\alpha$ is defined as the significant part of 
$A_0\ldots A_n$.
This does not depend on the representative of $\alpha$.

If $A_0\ldots A_n$ is the significant part of a vertex $\alpha$,
the \emph{height} of $\alpha$ is $H(\alpha)=n$. The \emph{basis} of $\CG$
is the set of vertices of height $0$, that is, $\{A/\sim\mid A\in \CP\}$.
We identify it with $\CP$.
\end{definition}

The next result, quite natural, will simplify the handling of arrows.

\begin{lemma}\label{lem:arrow-significantpart}
Let $\alpha=A_0\ldots A_n/\sim$ be a vertex of $\CG$ and
let $\alpha\to \beta$ be an arrow in $\CG$. Then there 
exists $A_{n+1}\in\CP$ such that
$\beta=A_0\ldots A_nA_{n+1}/\sim$.
\end{lemma}

\begin{proof}
By definition, there exist $B_0,\ldots, B_m, B_{m+1}\in\CP$ such that
$\alpha=B_0\ldots B_m/\sim$ and $\beta=B_0\ldots B_mB_{m+1}/\sim$.
Let $A_{n-k}\ldots A_0$ be the significant part of $\alpha$ (with 
$k=H(\alpha)\in\Lbrack 0, n\Rbrack$).
According to the definitions, we have $m\geq k$ and
\begin{gather*}
A_{n-i}=B_{m-i}\quad\text{for all }i\in\Lbrack 0,k\Rbrack,\\
\langle B_0\ldots B_{m-k}\rangle=B_{m-k}=A_{n-k}=
\langle A_0\ldots A_{n-k}\rangle.
\end{gather*}
This implies that $B_0\ldots B_mB_{m+1}\sim A_0\ldots A_nB_{m+1}$.
This proves the lemma with $A_{n+1}:=B_{m+1}$.
\end{proof}

We shall need some notions about paths in oriented graphs.

\begin{definition}
A \emph{(finite) path} in $\CG$ is a sequence of 
vertices $\alpha_0\ldots \alpha_n$ such that $\alpha_i\to \alpha_{i+1}$ 
is an arrow in $\CG$ for all $i\in\Lbrack 0,n-1\Rbrack$. A \emph{loop} is a 
path $\alpha_0\ldots \alpha_n$ such that $\alpha_n=\alpha_0$. An 
\emph{infinite path} in $\CG$ is an infinite sequence of
vertices $\bar\alpha=(\alpha_n)_{n\geq 0}$ such that 
$\alpha_i\to \alpha_{i+1}$ for all $i\geq 0$.

If $K$ is a subgraph of $\CG$, let $\Gamma(K)$ be the set of all
infinite paths in $K$.
\end{definition}

In the following, the infinite paths in $\CG$ will be denoted with a bar
(e.g. $\bar{\alpha}$) to distinguish them from vertices (e.g.
$\alpha_n,\beta$).

\begin{remark}
Endowed with the shift map $\sigma\colon (\alpha_n)_{n\geq 0}
\mapsto (\alpha_n)_{n\geq 1}$, $\Gamma(\CG)$ is a topological Markov chain
on a countable graph (see e.g. \cite{Ver1}).
We shall not explicitly use this structure of dynamical system, although
it will underlie the definition of rotation numbers of elements of
$\Gamma(\CG)$ and the relation between $\Gamma(\CG)$ and $X^\infty$.
\end{remark}

\subsection{Relation between itineraries of points of $X^\infty$ and infinite
paths in $\CG$}

The next result states that there is a correspondence between itineraries
of points of $X^\infty$ and infinite paths in $\CG$. This is a key 
property of the covering graph: it will allow us to pull back on $X^\infty$
the results obtained for $\CG$.

\begin{proposition}\label{prop:itinerary-G}
If $(A_n)_{n\geq 0}$ is the itinerary of some point $x\in X^\infty$ and
$\alpha_n=A_0\ldots A_n/\sim$, then $(\alpha_n)_{n\geq 0}$ is an infinite
path in $\CG$. Reciprocally, if  $(\alpha_n)_{n\geq 0}$ is an infinite path
in $\CG$ with $H(\alpha_0)=k$, then there exists a point $x\in X^\infty$
of itinerary $(A_n)_{n\geq 0}$ such that, for all $n\geq 0$,
$\alpha_n=A_0\ldots A_{n+k}/\sim$.
\end{proposition}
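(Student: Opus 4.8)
\textbf{Direction 1 (itineraries give paths).} This direction is essentially a bookkeeping exercise. Let $(A_n)_{n\geq 0}$ be the itinerary of $x\in X^\infty$ and set $\alpha_n=A_0\ldots A_n/\sim$. First I would observe that each $\alpha_n$ is a genuine vertex: since $F^n(x)\in A_n+\IZ$ and $F^i(x)\in A_i+\IZ$ for all $i$, the point $F^n(x)$ (translated into $X$) witnesses $\langle A_0\ldots A_n\rangle\neq\emptyset$. Then the arrow $\alpha_n\to\alpha_{n+1}$ is immediate from the definition of $\CG$, taking the representative $A_0\ldots A_nA_{n+1}$ of $\alpha_{n+1}$. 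So $(\alpha_n)_{n\geq 0}$ is an infinite path. The only mild subtlety is that consecutive $\alpha_n$'s are defined via nested representatives $A_0\ldots A_n$ and $A_0\ldots A_nA_{n+1}$, which is exactly the shape required by the definition of an arrow, so nothing more is needed.

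\textbf{Direction 2 (paths give points).} This is the substantive half. Given an infinite path $(\alpha_n)_{n\geq 0}$ with $H(\alpha_0)=k$, I would first use Lemma~\ref{lem:arrow-significantpart} repeatedly to choose a \emph{coherent} sequence of representatives: pick a representative $A_0\ldots A_k$ of the significant part of $\alpha_0$ (so $\langle A_0\ldots A_k\rangle=\langle\alpha_0\rangle$), and then inductively, using the lemma on the arrow $\alpha_n\to\alpha_{n+1}$, extend to $A_{n+k+1}\in\CP$ with $\alpha_{n+1}=A_0\ldots A_{n+k+1}/\sim$. This produces a single sequence $(A_m)_{m\geq 0}$ in $\CP$ with $\alpha_n=A_0\ldots A_{n+k}/\sim$ and hence $\langle A_0\ldots A_{n+k}\rangle=\langle\alpha_n\rangle\neq\emptyset$ for all $n$.

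Next I would realise the itinerary by a compactness argument. For each $n$, the set $K_n:=\{x\in T\mid r_\IR(x)\in[0,1),\ F^i(x)\in A_i+\IZ\text{ for }0\le i\le n+k\}$ is nonempty (this is where $\langle A_0\ldots A_{n+k}\rangle\neq\emptyset$ is used, pulling the witnessing point back by $F^{n+k}$ and translating by an integer to land in $r_\IR^{-1}([0,1))$), and it is closed because $F$ is continuous, each $A_i$ is compact, and $X+\IZ$ is closed. The sets $K_n$ are nested decreasing, and they live in the compact set $X\cap r_\IR^{-1}([0,1))$ (by Lemma~\ref{lem:basic-partition} their points lie in $\bigcup_{i,j}X_j^i$). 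Hence $\bigcap_n K_n\neq\emptyset$; any $x$ in this intersection satisfies $F^i(x)\in A_i+\IZ$ for all $i\geq 0$, so $x\in X^\infty$ and its itinerary is exactly $(A_n)_{n\geq 0}$, giving $\alpha_n=A_0\ldots A_{n+k}/\sim$ as required.

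\textbf{Expected main obstacle.} The only real care points are in Direction 2: first, checking that $K_n$ is genuinely closed — one must verify that the conditions $F^i(x)\in A_i+\IZ$ define a closed set, which uses compactness of the $A_i$ together with the fact that the translates $A_i+m$ are separated by a positive distance $\delta$ (as in the proof of Lemma~\ref{lem:basic-partition}), so that a convergent sequence of points in $K_n$ cannot have its $i$-th images drift between different translates; and second, confirming that the witnessing point for $\langle A_0\ldots A_{n+k}\rangle\neq\emptyset$ can be chosen with its orbit staying in $X+\IZ$ up to time $n+k$ and then translated into $r_\IR^{-1}([0,1))$ — this is exactly the content of the definition of $\langle\cdot\rangle$ combined with degree-$1$ invariance $F(y+1)=F(y)+1$. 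Neither point is deep, but they are where the argument must be written carefully.
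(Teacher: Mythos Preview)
Your approach matches the paper's exactly: bookkeeping for the forward direction, and for the converse, extracting a coherent $(A_m)_{m\ge 0}$ via Lemma~\ref{lem:arrow-significantpart} followed by a nested-compact-sets argument on $E_n:=\{x\in X\mid F^i(x)\in A_i+\IZ\text{ for }0\le i\le n\}$. The one simplification you miss is that the paper rewrites the defining condition as $F^i(x)\in A_i+p(A_0)+\cdots+p(A_{i-1})$ (a single compact translate, cf.\ Lemma~\ref{lem:itinerary-rho}), which makes closedness of $E_n$ immediate and dissolves what you flagged as the main obstacle.
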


\begin{proof}
Suppose that $(A_n)_{n\geq 0}$ is the itinerary of $x\in X^\infty$. Then
$\langle A_0\ldots A_n\rangle$ contains $F^n(x)
-(p(A_0)+\cdots+p(A_{n-1}))$. Thus 
$\langle A_0\ldots A_n\rangle\neq\emptyset$ and
$\alpha_n=A_0\ldots A_n/\sim$ is a
vertex of $\CG$. It follows from the definition that $\alpha_n\to
\alpha_{n+1}$ is an arrow in $\CG$, that is, $(\alpha_n)_{n\geq 0}$ is an
infinite path in $\CG$.

Reciprocally, suppose that $(\alpha_n)_{n\geq 0}$ is an
infinite path in $\CG$. 
Let $A_0\ldots A_k$ be the significant part of $\alpha_0$, with
$k=H(\alpha_0)$.
According to Lemma~\ref{lem:arrow-significantpart}, we can
find inductively $A_{n+k}
\in\CP$ such that $\alpha_{n}=A_0\ldots A_{n+k}/\sim$ for all $n\geq 0$.
For every $n\ge 0$, let
\begin{align*}
E_n&:=\{x\in X\mid \forall i\in\Lbrack 0,n\Rbrack, F^i(x)\in A_i+\IZ\}\\
&\;=\{x\in X\mid \forall i\in\Lbrack 0,n\Rbrack, F^i(x)\in A_i+p(A_0)+\cdots+p(A_{i-1})\}
\end{align*}
Then $E_n$ is a compact set and $E_{n+1}\subset E_n$. Moreover,
$(F^n(E_n+\IZ))\cap X=\langle A_0\ldots A_n\rangle\neq\emptyset$
(Lemma~\ref{lem:subintervalAn}(i)). Hence
$E_n\neq\emptyset$. Therefore, the set $\bigcap_{n\geq 0}E_n$ is non empty
and every point $x$ in this set satisfies: $x\in X$ and $\forall n\geq 0,
F^n(x)\in A_i+\IZ$, that is, $x\in X^\infty$ and its itinerary is
$(A_n)_{n\geq 0}$.
\end{proof}

\subsection{Structure of the covering graph}

The oriented graph $\CG$ is usually infinite. However its infinite part 
is ``small'' and we shall exploit the particular structure of the covering
graph. Proposition~\ref{prop:structureG} gives the main properties of the
structure of $\CG$. It implies that ``most'' infinite paths come back
infinitely many times to the basis, which is rigorously stated in 
Proposition~\ref{prop:structure-paths}.

\begin{lemma}\label{lem:one-arrow}
Let $\alpha,\beta$ be vertices of $\CG$ such that there is an 
arrow $\alpha\to \beta$. Then there exist $A_0,\cdots, A_n, A_{n+1}\in\CP$
and $k\in\Lbrack 1, N_{\ell(A_n)}\Rbrack$ such that
$\alpha=A_0\ldots A_n/\sim$, $\beta=A_0\ldots A_n A_{n+1}/\sim$ and
$A_{n+1}=X^{\ell(A_n)}_k$. Moreover, for all $j\in\Lbrack 1, k-1\Rbrack$,
$\langle \alpha\rangle\pluscover{F} X^{\ell(A_n)}_i+p(A_n)$ and
$\alpha\to X^{\ell(A_n)}_i$ is an arrow in $\CG$.
\end{lemma}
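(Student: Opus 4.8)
The plan is to unwind the definitions of arrows and of the basic partition, and to show that any arrow $\alpha\to\beta$ out of $\alpha$ must land on a vertex whose significant part ends in one of the intervals $X^{\ell(A_n)}_1,\ldots,X^{\ell(A_n)}_{N_{\ell(A_n)}}$ given by Lemma~\ref{lem:basic-partition}, and moreover that the earlier intervals in this list are also reachable by an arrow. First I would fix a representative $\alpha=A_0\ldots A_n/\sim$ and use Lemma~\ref{lem:arrow-significantpart} to write $\beta=A_0\ldots A_nA_{n+1}/\sim$ for some $A_{n+1}\in\CP$. Now $A_n=X^{\ell(A_n)}_{?}$ for some branch $X^{\ell(A_n)}$, and by Lemma~\ref{lem:subintervalAn}(ii) the set $\langle\alpha\rangle$ is a closed subinterval of $A_n$ containing $\min A_n$; by Lemma~\ref{lem:basic-partition}(iii), $F(\min A_n)=\min X^{\ell(A_n)}+p(A_n)\in\TR$, and by Lemma~\ref{lem:basic-partition}(ii), $F(A_n)\subset (X^{\ell(A_n)}+p(A_n))\cup\Int{\TR}$. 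Hence $F(\langle\alpha\rangle)-p(A_n)$ is a subinterval of $X^{\ell(A_n)}\cup\Int{\TR}$ that meets $\TR$ (namely at $\min X^{\ell(A_n)}$), so it is of the form "a piece of $\TR$ glued at $\min X^{\ell(A_n)}$ together with a subinterval $[\min X^{\ell(A_n)},c]$" of the branch. Since $A_{n+1}\in\CP$ is disjoint from $\TR$ and $\langle\beta\rangle=(F(\langle\alpha\rangle)-p(A_n))\cap A_{n+1}\neq\emptyset$ by Lemma~\ref{lem:subintervalAn}(i), the branch $X^{\ell(A_n)}$ must be the branch containing $A_{n+1}$, i.e. $A_{n+1}=X^{\ell(A_n)}_k$ for some $k\in\Lbrack 1,N_{\ell(A_n)}\Rbrack$, because the $\CP$-intervals in distinct branches are disjoint. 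This proves the first assertion.

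Next I would prove the "moreover" part: that for every $j\in\Lbrack 1,k-1\Rbrack$ we have $\langle\alpha\rangle\pluscover{F}X^{\ell(A_n)}_j+p(A_n)$ and that $\alpha\to X^{\ell(A_n)}_j$ is an arrow in $\CG$. The key geometric fact is that $[\min X^{\ell(A_n)},c]$, the branch-part of $F(\langle\alpha\rangle)-p(A_n)$, contains $X^{\ell(A_n)}_k$ (because $\langle\beta\rangle\subset X^{\ell(A_n)}_k$ is nonempty and $\langle\beta\rangle$ contains $\min X^{\ell(A_n)}_k$ by Lemma~\ref{lem:subintervalAn}(ii)), hence it contains everything below $\max X^{\ell(A_n)}_k$, and in particular, by the ordering $X^{\ell(A_n)}_1<\cdots<X^{\ell(A_n)}_{N_{\ell(A_n)}}$ from Lemma~\ref{lem:basic-partition}(i), it contains each $X^{\ell(A_n)}_j$ for $j<k$. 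I would then verify the positive covering: $\langle\alpha\rangle$ contains $\min A_n$ with $F(\min A_n)\in\TR$, so $r_{X^{\ell(A_n)}}(F(\min\langle\alpha\rangle)-p(A_n))=\min X^{\ell(A_n)}\le\min X^{\ell(A_n)}_j$, and there is a point $x\in\langle\alpha\rangle$ with $F(x)-p(A_n)=c\ge\max X^{\ell(A_n)}_j$; together with connectedness of $\langle\alpha\rangle$ this gives $\langle\alpha\rangle\pluscover{F}X^{\ell(A_n)}_j+p(A_n)$. For the arrow claim, note $(F(\langle\alpha\rangle)-p(A_n))\cap X^{\ell(A_n)}_j=X^{\ell(A_n)}_j$ is nonempty, so by Lemma~\ref{lem:subintervalAn}(i) the set $\langle A_0\ldots A_nX^{\ell(A_n)}_j\rangle$ is nonempty; hence $\gamma:=A_0\ldots A_nX^{\ell(A_n)}_j/\sim$ is a vertex and $\alpha\to\gamma$ is an arrow by definition, with $\langle\gamma\rangle\subset X^{\ell(A_n)}_j$ so $\gamma$ coincides with the basis vertex $X^{\ell(A_n)}_j$ up to $\sim$ only if its significant part is of height $0$; in general $\gamma$ need not equal the basis vertex, but the statement only asserts the existence of the arrow $\alpha\to X^{\ell(A_n)}_j$, which I would interpret (consistently with the identification of the basis with $\CP$) as: there is a vertex $\gamma$ with $\langle\gamma\rangle\subset X^{\ell(A_n)}_j$ and $\alpha\to\gamma$. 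I would state this carefully to match the paper's conventions.

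The main obstacle I anticipate is bookkeeping around the $\sim$-relation and the exact meaning of "$\alpha\to X^{\ell(A_n)}_i$ is an arrow in $\CG$": strictly, the target of an arrow must be an equivalence class of the form $A_0\ldots A_nA_{n+1}/\sim$, and $X^{\ell(A_n)}_i$ denotes an element of $\CP$, i.e. a basis vertex, so one must check that appending $X^{\ell(A_n)}_i$ to $A_0\ldots A_n$ yields a nonempty $\langle\cdot\rangle$ — which is exactly the content of the covering $\langle\alpha\rangle\pluscover{F}X^{\ell(A_n)}_i+p(A_n)$ combined with Lemma~\ref{lem:subintervalAn}(i) — and that, when convenient, this new vertex can be replaced by (or identified with) the basis vertex via the significant-part reduction. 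I would handle this by proving the covering relation first and deriving the arrow from it, since the covering is the robust statement and the arrow follows formally. A minor subtlety is the degenerate case $k=1$, where the "moreover" part is vacuous and there is nothing to prove; and the case where $F(\langle\alpha\rangle)-p(A_n)$ reaches only partway into the branch, which is precisely why one needs $\max X^{\ell(A_n)}_k\le c$ and the ordering of the $X^{\ell(A_n)}_j$'s to conclude coverage of all lower intervals.
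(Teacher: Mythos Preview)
Your approach matches the paper's, and the first part (identifying $A_{n+1}=X^{\ell}_k$ with $\ell=\ell(A_n)$, and deriving the positive covering $\langle\alpha\rangle\pluscover{F}X^{\ell}_j+p(A_n)$ for $j<k$) is essentially correct. But you stop one step short at the end and, as a result, misread what the lemma asserts.

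You yourself write the full equality $(F(\langle\alpha\rangle)-p(A_n))\cap X^{\ell}_j = X^{\ell}_j$, not merely nonemptiness. By Lemma~\ref{lem:subintervalAn}(i), the left-hand side \emph{is} $\langle A_0\ldots A_n X^{\ell}_j\rangle$. Hence $\langle A_0\ldots A_n X^{\ell}_j\rangle = X^{\ell}_j$, and the definition of $\sim$ (take $k=0$ in~\eqref{eq:def-equiv}) gives $A_0\ldots A_n X^{\ell}_j \sim X^{\ell}_j$. So your vertex $\gamma$ literally \emph{is} the basis vertex $X^{\ell}_j$; no hedging or reinterpretation is needed. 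Your proposed weaker reading (``there is some arrow $\alpha\to\gamma$ with $\langle\gamma\rangle\subset X^{\ell}_j$'') is not what the statement says, and the stronger conclusion is exactly what Proposition~\ref{prop:structureG}(i) needs: that all but at most one arrow out of $\alpha$ land in the basis.

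One minor overstatement: the branch part $[\min X^{\ell},c]$ of $F(\langle\alpha\rangle)-p(A_n)$ need not contain all of $X^{\ell}_k$; it only contains $\langle\beta\rangle$, which by Lemma~\ref{lem:subintervalAn}(ii) reaches down to $\min X^{\ell}_k$. That is still enough: since the $X^{\ell}_j$ are disjoint and ordered, $\max X^{\ell}_j<\min X^{\ell}_k\le c$ for every $j<k$, so $X^{\ell}_j\subset[\min X^{\ell},c]$ as required.
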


\begin{proof}
We write $\alpha=A_0\ldots A_n/\sim$, $p:=p(A_n)$ and $\ell:=\ell(A_n)$.
Lemma~\ref{lem:arrow-significantpart} states that
there is $A_{n+1}\in\CP$ such that $\beta=A_0\ldots A_nA_{n+1}/\sim$.
The set $\langle A_0\ldots A_nA_{n+1}\rangle$ is non empty 
and satisfies
\begin{align*}
\langle A_0\ldots A_nA_{n+1}\rangle&=F(\langle A_0\ldots A_n\rangle-p)
\cap A_{n+1}\quad\text{(Lemma~\ref{lem:subintervalAn}(i))}\\
&\subset F(A_n-p)\cap A_{n+1}\\
&\subset X^{\ell}\cap A_{n+1}
\end{align*}
Thus $A_{n+1}$ is necessarily of the form $X^{\ell}_k$ for some
$k\in\Lbrack 1,N_{\ell}\Rbrack$.
According to Lemma~\ref{lem:covering-in-graph}, $\langle\alpha\rangle
\pluscover{F}\langle A_0\ldots A_n X^\ell_k\rangle+p$. Thus there
exists $x\in \langle\alpha\rangle$ such that
$F(x)=\min \langle A_0\ldots A_nX^\ell_k\rangle+p=\min X^\ell_k+p$
(the second equality comes from Lemma~\ref{lem:subintervalAn}(ii)).
Moreover, $\min \langle\alpha\rangle=\min A_n$ 
(by Lemma~\ref{lem:subintervalAn}(ii) again), and thus $F(\min \langle\alpha
\rangle)\in\TR$ by Lemma~\ref{lem:basic-partition}(iii).
This implies that $\langle\alpha\rangle\pluscover{F}[\min X^\ell,\min X^\ell_k]
+p$. Thus, for all $j\in \Lbrack 1,k-1\Rbrack$, we have
\[
\langle\alpha\rangle\pluscover{F} X^\ell_j+p,
\]
and hence $F(\langle\alpha\rangle+\IZ)\cap X^\ell_j=X^\ell_j$.
According to Lemma~\ref{lem:subintervalAn}(i),
$F(\langle\alpha\rangle+\IZ)\cap X^\ell_j=\langle 
A_0\ldots A_nX^\ell_j\rangle$.
Consequently, $A_0\ldots A_nX^\ell_j\sim X^\ell_j$ and
$\alpha\to X^\ell_j$ is an arrow in $\CG$.
\end{proof}

\begin{proposition}\label{prop:structureG}
Let $\alpha,\beta$ be vertices of $\CG$.
\begin{enumerate}
\item
All but at most one arrows starting from
$\alpha$ end at a vertex in the basis.
\item
If  $\alpha\to\beta$ with 
$H(\beta)>0$, then $H(\beta)=H(\alpha)+1$.
\end{enumerate}
\end{proposition}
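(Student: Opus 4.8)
The plan is to prove both parts of Proposition~\ref{prop:structureG} by a careful analysis of the arrows out of a vertex $\alpha$, using Lemma~\ref{lem:one-arrow} as the main engine. Write $\alpha = A_0\ldots A_n/\!\sim$, and set $\ell := \ell(A_n)$, $p := p(A_n)$. Lemma~\ref{lem:one-arrow} tells us that every arrow $\alpha\to\beta$ is realised by some $A_{n+1}\in\CP$ with $\beta = A_0\ldots A_n A_{n+1}/\!\sim$, and moreover $A_{n+1} = X^\ell_k$ for some $k\in\Lbrack 1,N_\ell\Rbrack$; in other words, all target vertices of arrows out of $\alpha$ arise from appending one of the intervals $X^\ell_1 < X^\ell_2 < \cdots < X^\ell_{N_\ell}$ sitting inside the single branch $X^\ell$. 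So the whole question reduces to: for which $k$ is $A_0\ldots A_n X^\ell_k/\!\sim$ \emph{not} equivalent to $X^\ell_k/\!\sim$ (equivalently, has positive height)?

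For part (i), the key observation — already contained in the second assertion of Lemma~\ref{lem:one-arrow} — is that if $\alpha\to X^\ell_k$ is an arrow, then $\alpha\to X^\ell_j$ is an arrow for every $j\in\Lbrack 1,k-1\Rbrack$ and, crucially, $A_0\ldots A_n X^\ell_j \sim X^\ell_j$, i.e. the target is in the basis. Thus the only target that can possibly fail to lie in the basis is the one corresponding to the \emph{largest} index $k$ for which an arrow exists. Concretely: let $k_{\max}$ be the largest $k$ such that $\langle A_0\ldots A_n X^\ell_k\rangle\neq\emptyset$. For every $k < k_{\max}$, Lemma~\ref{lem:one-arrow} (applied with $k_{\max}$ in the role of its $k$) gives $\langle\alpha\rangle\pluscover{F}X^\ell_k+p$, hence $F(\langle\alpha\rangle+\IZ)\cap X^\ell_k = X^\ell_k = \langle A_0\ldots A_n X^\ell_k\rangle$ by Lemma~\ref{lem:subintervalAn}(i), so $A_0\ldots A_n X^\ell_k\sim X^\ell_k$ and the arrow $\alpha\to X^\ell_k$ lands in the basis. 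Only $\alpha\to X^\ell_{k_{\max}}$ may land higher up — that is exactly ``all but at most one arrow ends in the basis''.

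For part (ii), suppose $\alpha\to\beta$ with $H(\beta)>0$. By the above, $\beta$ must be $A_0\ldots A_n X^\ell_{k_{\max}}/\!\sim$ (any other target is in the basis, of height $0$). I need to show $H(\beta) = H(\alpha)+1 = n+1$ when $A_0\ldots A_n$ is the significant part of $\alpha$ (which I may assume after replacing $\alpha$'s representative by its significant part, legitimate by Lemma~\ref{lem:arrow-significantpart}). The height of $\beta$ is $n+1$ minus the largest $i$ such that $A_0\ldots A_n X^\ell_{k_{\max}}\sim A_i\ldots A_n X^\ell_{k_{\max}}$. By the definition of $\sim$, such an equivalence with $i\geq 1$ would require either $\langle A_0\ldots A_n X^\ell_{k_{\max}}\rangle = X^\ell_{k_{\max}}$ (the case $i = n+1$, which is excluded since $H(\beta)>0$ forces $A_0\ldots A_n X^\ell_{k_{\max}}\not\sim X^\ell_{k_{\max}}$), or a common suffix of length $\geq 1$ together with $\langle A_0\ldots A_{n+1-k}\rangle = A_{n+1-k}$ for the appropriate $k\geq 1$; but the latter forces $\langle A_0\ldots A_{n+1-k}\rangle = A_{n+1-k}$ with $n+1-k\leq n$, which would say $A_0\ldots A_n\sim A_{n+1-k}\ldots A_n$, contradicting that $A_0\ldots A_n$ is the significant part of $\alpha$ (i.e. $H(\alpha)=n$). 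Hence no such $i\geq 1$ exists, $A_0\ldots A_n X^\ell_{k_{\max}}$ is its own significant part, and $H(\beta) = n+1 = H(\alpha)+1$.

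The main obstacle I anticipate is the bookkeeping in part (ii): one has to unwind the definition of $\sim$ carefully, separating the ``full collapse'' case (suffix of maximal length $k = n+1$, target in the basis) from the ``partial collapse'' case, and in the partial case derive a contradiction with the maximality built into the notion of significant part. The positive-covering content is entirely recycled from Lemma~\ref{lem:one-arrow}; what remains is a purely combinatorial argument about equivalence classes of words, and the only subtlety is making sure that ``$H(\beta)>0$'' is used in exactly the right place to rule out the collapse to the basis while still allowing the conclusion $H(\beta)=H(\alpha)+1$.
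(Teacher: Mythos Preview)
Your proof is correct and follows essentially the same approach as the paper's. Part~(i) is identical in substance: both invoke Lemma~\ref{lem:one-arrow} to see that for any arrow $\alpha\to A_0\ldots A_n X^\ell_k/\!\sim$, all smaller indices $j<k$ give arrows landing in the basis, so at most the top index can land outside. For part~(ii), the paper argues in the opposite direction from you --- it does \emph{not} first reduce to the significant part of $\alpha$, but instead takes any representative $A_0\ldots A_n$, writes the significant part of $\beta$ as $A_i\ldots A_{n+1}$ (with $i\le n$ since $H(\beta)>0$), and then observes that $\langle A_0\ldots A_i\rangle=A_i$ forces $A_i\ldots A_n$ to be the significant part of $\alpha$; this gives $H(\alpha)=n-i=H(\beta)-1$ directly. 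Your version (fix the significant part of $\alpha$ first, then show $\beta$'s significant part cannot shrink) is the same computation read backwards, and your justification of the WLOG via Lemma~\ref{lem:arrow-significantpart} is sound.
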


\begin{proof}
We write $\alpha=A_0\ldots A_n/\sim$ and $p=p(A_n)$.

i) If $\alpha\to \beta$, 
Lemma~\ref{lem:one-arrow} states that there exists $k\in\Lbrack 1,
N_\ell\Rbrack$ such that $\beta=A_0\ldots A_n X^\ell_k/\sim$ and, for all
$j\in\Lbrack 1, k-1\Rbrack$, $\alpha\to X^\ell_j$ is an arrow in $\CG$.
This implies that there is at most one
vertex of the form $A_0\ldots A_n X^{\ell}_j/\sim$ of height different from $0$.
This proves (i).

\medskip
ii) Suppose that $\alpha\to\beta$ with $H(\beta)>0$. Let
$A_{n+1}\in\CP$ be such that $\beta=A_0\ldots A_nA_{n+1}/\sim$
(Lemma~\ref{lem:arrow-significantpart}). The significant
part of $\beta$ is $A_i\ldots A_nA_{n+1}$ with $i:=n+1-H(\beta)\leq n$
because $H(\beta)\geq 1$. Then by definition $\langle A_0\ldots A_i\rangle
=A_i$, and the significant part of $\alpha$ is $A_i\ldots A_n$.
Hence $H(\alpha)=n-i=H(\beta)-1$. This proves (ii).
\end{proof}

\begin{remark}\label{rem:structureG}
The structure of the graph $\CG$ can be deduced from 
Proposition~\ref{prop:structureG}: if we start from a vertex $A$ in the basis
and go up into the heights, there is a unique, finite or infinite, path
$(\alpha_n)$ starting at $A$ and such that $H(\alpha_i)=i$. Two such
paths starting at two different vertices in the basis are disjoint because
if $\alpha$ is a vertex of height $n>0$ and significant part $A_0\ldots A_n$
then $\alpha$ belongs only to the path starting at $A_0\in\CP$ (this paths
begins with $A_0, A_0A_1/\sim,\ldots, A_0\ldots A_n/\sim=\alpha$).
The only other arrows in $\CG$ end in the basis.
This is illustrated in Figure~\ref{fig:structureG}.

\begin{figure}[ht]
\centerline{\includegraphics{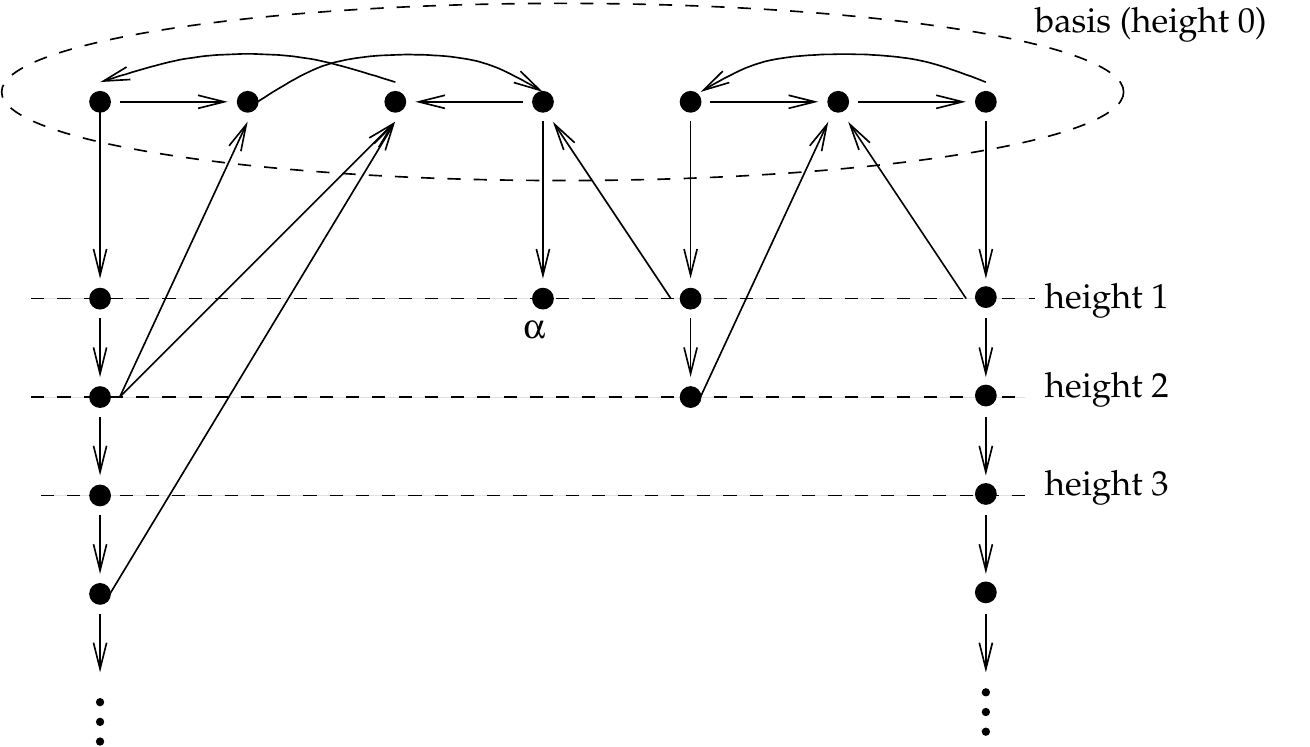}}
\caption{The structure of the covering graph. At the top: the finite basis.
\label{fig:structureG}}
\end{figure}
\end{remark}

\begin{definition}\label{defi:connected}
An oriented graph is \emph{strongly connected} if for every pair of vertices
$(u,v)$, there exists an oriented path of positive length
from $u$ to $v$. 

The \emph{connected components} of an oriented graph are the maximal
strongly connected subgraphs.
\end{definition}

\begin{remark}
Two connected components are either disjoint or equal.

Some vertices (called inessential vertices) 
may belong to no connected component, see e.g. the vertex $\alpha$ in the 
middle of Figure~\ref{fig:structureG}.
\end{remark}

\begin{proposition}\label{prop:structure-paths}
\begin{enumerate}
\item
Every connected component of $\CG$ meets the basis and  
the number of connected components is finite and bounded by $\#\CP$.
\item 
Let 
\[
\CI:=\{(\alpha_n)_{n\geq 0}\in\Gamma(\CG)\mid H(\alpha_0)=0\text{ and }
\forall n\geq 1, H(\alpha_n)>0\}.
\]
If $(\alpha_n)_{n\geq 0}\in\CI$, 
then $H(\alpha_n)=n$ for all $n\geq 0$. Moreover, $\CI$ is a finite set 
with $\#\CI\le \#\CP$.
\item  
If $(\alpha_n)_{n\geq 0}$ is an infinite path in $\CG$ then, either
there exists a connected component $C$ of $\CG$ such that all vertices
$\alpha_n$ belong to $C$ for all great enough $n$, or there exist
$(\beta_n)_{n\geq 0}\in\CI$ and $N,M\geq 0$ such that,
$\forall n\geq 0$, $\alpha_{N+n}=\beta_{M+n}$.
\end{enumerate}
\end{proposition}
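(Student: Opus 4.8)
The three parts are tightly linked through the structural picture of Remark~\ref{rem:structureG}, so I would prove them in the order (ii), (i), (iii), reusing (ii) inside (i) and both inside (iii). The key is that Proposition~\ref{prop:structureG} forces the ``high'' part of $\CG$ to consist only of the disjoint rays (one for each basis vertex) that climb the heights, and every arrow not along such a ray drops straight back to the basis.

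\textbf{Part (ii).} Let $(\alpha_n)_{n\ge0}\in\CI$. Since $H(\alpha_0)=0$ and $H(\alpha_1)>0$, Proposition~\ref{prop:structureG}(ii) gives $H(\alpha_1)=H(\alpha_0)+1=1$; inductively, since each $H(\alpha_{n+1})>0$, the same clause gives $H(\alpha_{n+1})=H(\alpha_n)+1$, hence $H(\alpha_n)=n$. For finiteness: by Remark~\ref{rem:structureG} there is, for each $A\in\CP$, at most one path starting at $A$ that strictly increases in height at every step, so an element of $\CI$ is entirely determined by its initial vertex $\alpha_0\in\CP$. Thus the map $(\alpha_n)_{n\ge0}\mapsto\alpha_0$ is an injection $\CI\hookrightarrow\CP$, giving $\#\CI\le\#\CP$.

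\textbf{Part (i).} First, every vertex lies on a path issuing from the basis: if $\alpha=A_0\ldots A_n/\sim$ has significant part $A_i\ldots A_n$, then $A_i,\;A_iA_{i+1}/\sim,\ldots,\;A_i\ldots A_n/\sim=\alpha$ is such a path (using Lemma~\ref{lem:arrow-significantpart} and the fact that $\langle A_0\ldots A_i\rangle=A_i$ makes $A_i$ a basis vertex). Hence a strongly connected component $C$ contains some vertex $\alpha$, and an oriented cycle through $\alpha$; following that cycle, by Proposition~\ref{prop:structureG} a cycle cannot stay forever in heights $>0$ (by (ii) the height would strictly increase along it, contradicting periodicity), so the cycle meets a vertex of height $0$, i.e.\ $C$ meets the basis. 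Two distinct components are disjoint, so the assignment of each component to a basis vertex it contains is injective into $\CP$; therefore the number of components is at most $\#\CP$.

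\textbf{Part (iii)} is the main obstacle, but the structure makes it tractable. Let $\bar\alpha=(\alpha_n)_{n\ge0}$ be any infinite path. Consider the heights $H(\alpha_n)$. If these are bounded, then the path visits only finitely many vertices (the basis is finite, and below any fixed height there are finitely many vertices by Remark~\ref{rem:structureG}), so it is eventually confined to a cycle-recurrent set, hence eventually inside one strongly connected component $C$; this is the first alternative. If the heights are unbounded, I claim the path is eventually strictly increasing in height: indeed, once $H(\alpha_n)\ge1$, the arrow $\alpha_n\to\alpha_{n+1}$ either drops to the basis (height $0$) or, by Proposition~\ref{prop:structureG}(ii), increases the height by exactly $1$; so after the last time the path is in the basis — which must exist, else the heights stay $\le\max_{A\in\CP}$ (the heights of basis-successors are $0$ or $1$) and would be bounded — say at time $N$ with $H(\alpha_N)=0$, the tail $(\alpha_{N+n})_{n\ge0}$ never returns to height $0$, hence lies in $\CI$ after re-indexing: set $\beta_n:=\alpha_{N+n}$, so $(\beta_n)_{n\ge0}\in\CI$ and $\alpha_{N+n}=\beta_n$ for all $n$, giving the second alternative with $M=0$. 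The one delicate point is ensuring that ``heights unbounded'' forces a last visit to the basis: a tail that never hits height $0$ but has a bounded height is impossible by Proposition~\ref{prop:structureG}(ii) (bounded height with all heights $\ge1$ would force the height to be eventually constant, contradicting (ii) which makes it strictly increase), and a tail that hits height $0$ infinitely often would, between two consecutive such hits, climb by $+1$'s and drop — but the only arrow out of a height-$0$ vertex of the relevant ray either stays at height $0$-successors or begins a ray, so revisiting the basis infinitely often keeps the height bounded by $1$, again contradicting unboundedness. Hence the dichotomy is exhaustive, which completes the proof.
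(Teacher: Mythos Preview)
Parts (i) and (ii) are correct and essentially match the paper's argument.

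Part (iii) contains a real error. Your dichotomy ``heights bounded versus unbounded'' does not line up with the two alternatives of the statement, and the sentence ``revisiting the basis infinitely often keeps the height bounded by $1$'' is simply false. Between two consecutive basis visits at times $n_k<n_{k+1}$, Proposition~\ref{prop:structureG}(ii) forces the heights to be exactly $0,1,2,\ldots,n_{k+1}-n_k-1,0$; so if the gaps $n_{k+1}-n_k$ grow without bound, the heights are unbounded as well. A path that returns to $\CP$ infinitely often with growing gaps therefore has unbounded heights, yet it satisfies the \emph{first} alternative (some $A\in\CP$ is visited infinitely often, hence the tail lies in the connected component of $A$), not the second. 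Your argument misplaces all such paths. The paper avoids this by taking the correct dichotomy from the outset: either $\alpha_n\in\CP$ for infinitely many $n$ (then pigeonhole on the finite set $\CP$ gives the first alternative), or there exists $N$ with $\alpha_n\notin\CP$ for all $n\ge N$ (then heights strictly increase from time $N$ and one builds $(\beta_n)\in\CI$).

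There is also a secondary gap in your treatment of the second alternative. Your construction sets $\beta_n:=\alpha_{N+n}$ with $M=0$, which presupposes $H(\alpha_N)=0$ for some $N$. If the path \emph{never} touches the basis (e.g.\ $H(\alpha_0)=5$ and the heights climb forever), no such $N$ exists and one genuinely needs $M>0$. The paper handles this by taking $M:=H(\alpha_N)$, reading off the significant part $A_0\ldots A_M$ of $\alpha_N$, and defining $\beta_i:=A_0\ldots A_i/\sim$ for $0\le i\le M$ so as to walk back down the ray to the basis; Lemma~\ref{lem:arrow-significantpart} then lets one extend forward so that $\alpha_{N+n}=\beta_{M+n}$ for all $n\ge 0$.
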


\begin{proof}
i) If the vertex $v$ belongs to some connected component, there exists
a loop starting at $v$. According to the structure of $\CG$ (see 
Remark~\ref{rem:structureG}), every loop
goes through the basis, which is finite. Thus every connected component
meets the basis and the number of connected
components of $\CG$ is at most $\#\CP<+\infty$.

ii) Let $(\alpha_n)_{n\geq 0}\in \CI$. By Proposition~\ref{prop:structureG}(ii),
$H(\alpha_{n+1})=H(\alpha_n)+1$ for all $n\geq 0$, 
and thus $H(\alpha_n)=n$ for all $n\geq 0$.
According to Proposition~\ref{prop:structureG}(i), 
each vertex $\alpha_{n+1}$ is uniquely
determined by the properties that $\alpha_n\to\alpha_{n+1}$ and 
$H(\alpha_{n+1})\neq 0$. Since $\alpha_0\in\CP$,
the number of such infinite paths is less than or
equal to $\# \CP<+\infty$.

iii) Let $(\alpha_n)_{n\geq 0}$ be an infinite path in $\CG$. There are
two cases. 

\noindent $\bullet$ 
Suppose that there exists $N$ such that
$\forall n\geq N, \alpha_n\not\in\CP$. Then, by 
Proposition~\ref{prop:structureG}(ii),
$\forall n\geq N, H(\alpha_{n+1})=H(\alpha_n)+1$, and thus
$\forall n\geq 0$, $H(\alpha_{N+n})=H(\alpha_N)+n$. Let $A_0\ldots A_M$ be
the significant part of $\alpha_N$, with $M=H(\alpha_N)$. By 
Lemma~\ref{lem:arrow-significantpart}, there exists a sequence 
$(A_n)_{n\geq M+1}$ of elements of $\CP$ such that, for all $n\geq 0$,
$\alpha_{N+n}=A_0\ldots A_{M+n}/\sim$. We define $\beta_n:=A_0\ldots A_n/\sim$
for all $n\geq 0$. Since $H(\beta_M)=H(\alpha_N)=M$, we have $H(\beta_n)=n$
for all $n\in\Lbrack 0,M\Rbrack$, and $H(\beta_{M+n})=H(\alpha_{N+n})=M+n$ for all
$n\geq 0$. Hence $(\beta_n)_{n\geq 0}\in\CI$ and
$\alpha_{N+n}=\beta_{M+n}$ for all $n\geq 0$.

\noindent $\bullet$ 
Otherwise, there exist
infinitely many $n$ such that $\alpha_n\in\CP$. Since $\CP$ is finite,
there exists $A\in \CP$ such that $\alpha_n=A$ for infinitely many $n$.
Consequently, $A$ belongs to some connected component $C$ and
$\alpha_n$ belongs to $C$ for all great enough $n$.
\end{proof}

\section{The rotation set of the covering graph}
\subsection{Rotation numbers of infinite paths}

\begin{definition}
Let $\alpha\to \beta$ be an arrow in $\CG$ and let $A$ be the 
unique element of $\CP$ such that $\langle\alpha\rangle\subset
A\in\CP$. The \emph{weight} of the arrow $\alpha\to \beta$
is defined as $W(\alpha\beta):=p(A)$. 
\end{definition}

This naturally leads to the
following definition of rotation numbers for infinite paths
(see \cite{Zie} for a similar notion in the case of subshifts of finite type).

\begin{definition}
If $\alpha_0\ldots \alpha_n$ is a finite path in $\CG$, its \emph{length} is
$L(\alpha_0\ldots \alpha_n):=n$ and its \emph{weight} is 
$W(\alpha_0\ldots \alpha_n):=\sum_{i=0}^{n-1}W(\alpha_i\alpha_{i+1})$.

If $\bar\alpha=(\alpha_n)_{n\geq 0}\in \Gamma(\CG)$, then its \emph{rotation
number} is
\[
\rhos(\bar\alpha):=\lim_{n\to+\infty}\frac{W(\alpha_0\ldots \alpha_n)}{n}
\]
when this limit exists.

If $\Gamma'\subset \Gamma(\CG)$, let $\Rot_{\Gamma'}:=\{\rho(\bar\alpha)\mid
\bar\alpha\in\Gamma'\text{ and }\rho(\bar\alpha)\text{ exists}\}$.
\end{definition}

\begin{definition}
If $\gamma=\alpha_0\ldots\alpha_n$ and $\gamma'=\beta_0\ldots\beta_m$ are
two paths in $\CG$ with $\beta_0=\alpha_n$, let $\gamma\cdot\gamma'$ denote the
concatenation of the two paths, that is,
\[\gamma\cdot\gamma':=\alpha_0\ldots\alpha_n\beta_1\ldots\beta_m.
\]
If $\gamma$ is a loop then $\gamma^n:=\gamma\cdot\gamma\ldots\gamma$ 
is the $n$-time
concatenation of $\gamma$. We define similarly the concatenation of infinitely
many finite paths, and the concatenation of a finite path with an infinite
path; in these two cases, the resulting paths are infinite.

If $\gamma$ is a loop in $\CG$, let $\widetilde\gamma:=\gamma^{\infty}$
be the corresponding periodic infinite path.
\end{definition}

The next lemma is straightforward.

\begin{lemma}
\begin{enumerate}
\item If $\gamma,\gamma'$ are two finite paths that can be concatenated, then
$L(\gamma\cdot\gamma')=L(\gamma)+L(\gamma')$ and 
$W(\gamma\cdot\gamma')=W(\gamma)+
W(\gamma')$.
\item
 If $\gamma$ is a loop, then
$\rho(\widetilde \gamma)=\frac{W(\gamma)}{L(\gamma)}\in\IQ$.
\end{enumerate}
\end{lemma}

\subsection{The rotation set of a connected component}

We have seen in Proposition~\ref{prop:structure-paths} that an infinite path
either ultimately belongs to some connected component of $\CG$, or
ultimately coincide with some infinite path belonging to some finite set 
$\CI$. In this subsection, we focus on the first case and we study the
rotation set of a given connected component of $\CG$.

We start with a lemma that uses the concatenation of loops to get
rotation numbers.

\begin{lemma}\label{lem:interval}
\begin{enumerate}
\item Let $\gamma_1,\gamma_2$ be two loops in $\CG$ starting at the same
vertex $\alpha$. If $r\in [\rhos(\widetilde\gamma_1), 
\rhos(\widetilde\gamma_2)]\cap \IQ$, there exists a loop $\gamma$ starting
at $\alpha$ such that $\rhos(\widetilde\gamma)=r$.
\item Let $\alpha$ be a vertex of $\CG$ and, for every $n\geq 0$, 
let $\gamma_n$ be a loop in $\CG$ starting at
$\alpha$. If $\lim_{n\to+\infty}\rhos
(\widetilde\gamma_n)=s\in\IR$, then there exists $\bar\alpha=(\alpha_n)_{n\geq 0}
\in \Gamma(\CG)$ such that $\alpha_0=\alpha$, $\alpha_n=\alpha$ for
infinitely many $n$ and $\rhos(\bar\alpha)=s$.
\end{enumerate}
\end{lemma}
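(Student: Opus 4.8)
The plan is to prove the two statements separately, both relying on concatenation of loops and computing the effect on length and weight.

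\medskip
\textbf{Part (i).} First I would handle the case $\rho(\widetilde\gamma_1)=\rho(\widetilde\gamma_2)$, where either loop already works. Otherwise, write $r=p/q$ in lowest terms and set $W_i:=W(\gamma_i)$, $L_i:=L(\gamma_i)$, so $r$ lies (weakly) between $W_1/L_1$ and $W_2/L_2$; without loss of generality $W_1/L_1\le p/q\le W_2/L_2$. The standard trick is to look for nonnegative integers $a,b$ (not both zero) such that the loop $\gamma:=\gamma_1^{a}\cdot\gamma_2^{b}$ (which is again a loop at $\alpha$, since both start and end at $\alpha$) has rotation number exactly $p/q$. By the length/weight additivity lemma, $L(\gamma)=aL_1+bL_2$ and $W(\gamma)=aW_1+bW_2$, so I need
\[
\frac{aW_1+bW_2}{aL_1+bL_2}=\frac pq,
\]
i.e. $q(aW_1+bW_2)=p(aL_1+bL_2)$, i.e. $a(qW_1-pL_1)=b(pL_2-qW_2)$. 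Both coefficients $qW_1-pL_1\le 0$ and $pL_2-qW_2\le 0$ are of the same sign (nonpositive), so after changing signs I get $a\,|qW_1-pL_1| = b\,|pL_2-qW_2|$ with both bracketed quantities nonnegative integers; taking $a=|pL_2-qW_2|$ and $b=|qW_1-pL_1|$ (or, to be safe when one of them vanishes, taking that exponent to be $1$ and the matching loop's exponent as forced, or simply $a=0$ or $b=0$ when an endpoint is attained) produces the desired loop. The only subtlety is the degenerate cases where one bracket is zero: then $r$ equals one of the endpoint rotation numbers and $\gamma_i$ itself suffices. This part is essentially elementary arithmetic.

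\medskip
\textbf{Part (ii).} Here the idea is a diagonal/concatenation argument: build an infinite path $\bar\alpha$ by concatenating longer and longer powers of the $\gamma_n$, so that the running average of the weight-to-length ratio is forced to converge to $s$. Concretely, I would construct inductively an increasing sequence of exponents $m_k$ and use the infinite concatenation $\bar\alpha:=\gamma_{n_1}^{m_1}\cdot\gamma_{n_2}^{m_2}\cdot\gamma_{n_3}^{m_3}\cdots$, where $n_k\to\infty$ is chosen so that $\rho(\widetilde\gamma_{n_k})$ is within $1/k$ of $s$, and $m_k$ is chosen large enough that the contribution of the block $\gamma_{n_k}^{m_k}$ dominates the accumulated length and weight of all previous blocks (say, its length exceeds $k$ times the total previous length, and also exceeds $k$ times $L(\gamma_{n_{k+1}})$ so a single further loop is negligible). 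Writing $S_j:=\sum_{i\le j} m_i L(\gamma_{n_i})$ for the partial lengths, one checks that for any $N$ with $S_{j-1}< N\le S_j$, the prefix of length $N$ has weight-to-length ratio trapped between the running averages after the $(j-1)$st and $j$th blocks, both of which are within $O(1/j)$ of $s$; letting $N\to\infty$ forces $j\to\infty$ and hence $\rho(\bar\alpha)=s$. Since each block ends at $\alpha$, indeed $\alpha_n=\alpha$ for infinitely many $n$, and $\alpha_0=\alpha$.

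\medskip
\textbf{Main obstacle.} The genuinely technical point is the limit computation in (ii): one must bound the weight-to-length ratio of an \emph{arbitrary} prefix, not just at the block boundaries, and show it converges to $s$. The clean way is the telescoping estimate above: after the $k$th block is completed the average is within $c/k$ of $s$ (using that each completed block has ratio within $1/k$ of $s$ plus a vanishing contribution from the initial finite mess), and within a single block the average moves monotonically from one near-$s$ value toward another near-$s$ value, so it never escapes a shrinking neighbourhood of $s$. Making the choice of $m_k$ explicit enough that "the previous blocks become negligible" and "one extra loop is negligible" are both quantitatively true is the step that needs care; everything else is bookkeeping with the additivity lemma for $L$ and $W$.
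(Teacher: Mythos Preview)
Your proposal is correct and follows essentially the same approach as the paper. For (i), the paper first replaces $\gamma_1,\gamma_2$ by powers having a common length $L$ (also a multiple of $q$) and then takes the explicit combination $(\gamma_1')^{W_2-p'}\cdot(\gamma_2')^{p'-W_1}$, whereas you solve the linear Diophantine relation $a(qW_1-pL_1)=b(pL_2-qW_2)$ directly; these are equivalent bookkeeping choices. For (ii), the paper gives less detail than you do---it simply asserts that the concatenation $\gamma_0^{i_0}\cdot\gamma_1^{i_1}\cdots$ has rotation number $s$ provided $(i_n)$ grows fast enough, and refers to \cite{AR} for the explicit estimate---so your outline, with the two growth conditions on $m_k$, is already more detailed. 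One small caveat: your ``monotonically'' claim only holds at the boundaries between full copies of $\gamma_{n_k}$ inside a block; for prefixes ending mid-copy you need (and do invoke) the ``one further loop is negligible'' condition, together with the uniform bound $|W(\alpha\beta)|\le\max_{A\in\CP}|p(A)|$ on arrow weights, to control the partial-copy contribution.
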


\begin{proof}
i) We write $r=p/q$ with $p\in\IZ$ and $q\in\IN$. 
Let $L$ be a multiple of $L(\gamma_1), L(\gamma_2)$ 
and $q$,
and let $k_1,k_2,k$ be such that $L=k_1L(\gamma_1)=k_2L(\gamma_2)=kq$.
We set $\gamma_i':=(\gamma_i)^{k_i}$ and $W_i:=W(\gamma_i')$ for 
$i\in\{1,2\}$, and
 $p':=kp$. In this way, $L(\gamma_i')=L$ and
$\rhos(\widetilde\gamma_i)=\rhos(\widetilde\gamma'_i)=W_i/L$
for $i\in\{1,2\}$, and $r=p'/L$. Since
$\rhos(\widetilde\gamma_1)\leq r\leq \rhos(\widetilde\gamma_2)$, we
have $W_1\leq p'\leq W_2$. We define
$\gamma:=(\gamma_1')^{W_2-p'}\cdot(\gamma_2')^{p'-W_1}$. This is a loop starting
at $\alpha$, and 
\[
\rhos(\widetilde\gamma)=\frac{W(\gamma)}{L(\gamma)}=
\frac{W_1(W_2-p')+W_2(p'-W_1)}{L(W_2-p')+L(p'-W_1)}
=\frac{(W_2-W_1)p'}{(W_2-W_1)L}=r.
\]
This proves (i).

ii) Let $(i_n)_{n\geq 0}$ be a sequence of positive integers and 
$\bar\alpha:=\gamma_0^{i_0}\cdot\gamma_1^{i_1}\cdot\ldots \gamma_n^{i_n}\ldots$.
This is an infinite path starting at $\alpha$ and passing at $\alpha$
infinitely many times. It can be shown that, if the sequence $(i_n)_{n\ge 0}$ 
increases
sufficiently fast, then $\rhos(\bar\alpha)=\lim_{n\to+\infty}
\rhos(\widetilde\gamma_n)=s$ (see e.g. the proof of \cite[Theorem~3.1]{AR} 
for a similar proof expliciting the growth of $(i_n)_{n\geq 0}$).
\end{proof}

\begin{proposition}\label{prop:RotC}
Let $C$ be a connected component of $\CG$. Then
\begin{enumerate}
\item
$\Rot_{\Gamma(C)}$ is a non empty compact interval.
\item
$\forall r\in \Int{\Rot_{\Gamma(C)}}\cap\IQ$, there exists
a loop $\gamma$ in $C$ such that $\rhos(\widetilde\gamma)=r$.
\end{enumerate}
\end{proposition}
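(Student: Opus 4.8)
The plan is to prove that $\Rot_{\Gamma(C)}$ equals the closed interval $[a,b]$, where
\[
a:=\inf\{\rhos(\widetilde\gamma)\mid\gamma\text{ a loop in }C\},\qquad
b:=\sup\{\rhos(\widetilde\gamma)\mid\gamma\text{ a loop in }C\}.
\]
This set is non-empty because $C$, being strongly connected, contains a loop, and it is bounded because every arrow of $\CG$ has weight of modulus at most $M:=\max_{A\in\CP}|p(A)|$, so $a$ and $b$ are finite reals and $a\le b$. Once $\Rot_{\Gamma(C)}=[a,b]$ is known, (i) follows at once, and the loop realizing each rational interior point — produced along the way — gives (ii).

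\textbf{The inclusion $[a,b]\subseteq\Rot_{\Gamma(C)}$.} Fix a vertex $\alpha$ of $C$. For any loop $\gamma$ in $C$, strong connectedness provides paths in $C$ from $\alpha$ to the base of $\gamma$ and back; inserting $\gamma^k$ between them gives loops based at $\alpha$ whose rotation numbers tend to $\rhos(\widetilde\gamma)$. Hence the infimum and supremum of $\rhos(\widetilde\eta)$ over loops $\eta$ based at $\alpha$ are again $a$ and $b$. Given $r\in(a,b)\cap\IQ$, pick loops $\gamma_1,\gamma_2$ based at $\alpha$ with $\rhos(\widetilde{\gamma_1})<r<\rhos(\widetilde{\gamma_2})$ and apply Lemma~\ref{lem:interval}(i) to obtain a loop $\gamma$ in $C$ with $\rhos(\widetilde\gamma)=r$; this is statement (ii). Given any $r\in[a,b]$, pick loops $\gamma_n$ based at $\alpha$ with $\rhos(\widetilde{\gamma_n})\to r$ and apply Lemma~\ref{lem:interval}(ii): the infinite path it produces is built by concatenating powers of the $\gamma_n$, so it lies in $\Gamma(C)$, and its rotation number is $r$.

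\textbf{The inclusion $\Rot_{\Gamma(C)}\subseteq[a,b]$.} Let $\bar\alpha=(\alpha_n)_{n\ge0}\in\Gamma(C)$ with $\rhos(\bar\alpha)=s$; I would decompose $\bar\alpha$ into loops. If $\bar\alpha$ meets the basis $\CP$ infinitely often, then some $w\in\CP\cap C$ is visited at times $n_0<n_1<\cdots$, the segments $\alpha_{n_{j-1}}\cdots\alpha_{n_j}$ are loops in $C$ — so each has rotation number in $[a,b]$ — and $W(\alpha_0\cdots\alpha_{n_j})$ is trapped between $W(\alpha_0\cdots\alpha_{n_0})+a(n_j-n_0)$ and $W(\alpha_0\cdots\alpha_{n_0})+b(n_j-n_0)$; dividing by $n_j$ and letting $j\to\infty$ gives $s\in[a,b]$. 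If $\bar\alpha$ meets $\CP$ only finitely often, then by Proposition~\ref{prop:structureG}(ii) its heights are eventually strictly increasing, so $\bar\alpha$ runs up a tower $\mu_0\to\mu_1\to\cdots$ above some $\mu_0\in\CP$; since $\bar\alpha\in\Gamma(C)$ every $\mu_m$ lies in $C$ (those below the range of $\bar\alpha$ because each lies on a loop through a vertex of $\bar\alpha$), and $s=\lim_m\tfrac1m W(\mu_0\cdots\mu_m)$. Because each $\mu_m$ lies on a loop of $\CG$, Proposition~\ref{prop:structureG}(i) forces $\mu_m$ to have an arrow into $\CP$ for infinitely many $m$ (otherwise some $\mu_m$ with $m$ large would have $\mu_m\to\mu_{m+1}$ as its only outgoing arrow and could lie on no loop). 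Moreover, any loop through $\mu_m$ is the climb $\mu_0\cdots\mu_m$ followed by a path returning to $\mu_0$ inside $C$, and the first vertex of that path lying in $\CP$ is some $B\in\CP\cap C$ for which there is an arrow $\mu_{m'}\to B$ (with $m'\ge m$) and a path from $B$ to $\mu_0$ inside $C$. Pigeonholing over the finite set $\CP\cap C$, there are a fixed $B^*\in\CP\cap C$, a fixed path $P^*$ from $B^*$ to $\mu_0$ in $C$, and arbitrarily large $m'$ with $\mu_{m'}\to B^*$; the loops $(\mu_0\cdots\mu_{m'})\cdot(\mu_{m'}\to B^*)\cdot P^*$ lie in $C$, and since $|W(\mu_{m'}\to B^*)|$ and $|W(P^*)|$ are bounded independently of $m'$, their rotation numbers converge to $s$. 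Hence $s\in[a,b]$, and $\Rot_{\Gamma(C)}=[a,b]$ follows.

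\textbf{Main obstacle.} The delicate point is the last subcase — an infinite path in $C$ that never returns to the basis (equivalently, one ultimately coinciding with an element of $\CI$, cf.\ Proposition~\ref{prop:structure-paths}). There the loop decomposition breaks down and $s$ must instead be realized as a limit of rotation numbers of loops formed by ``climbing the tower to an exit into the basis and returning to the tower's base''. What makes this work is precisely that the whole tower sits inside the strongly connected component $C$: this forces the exits to the basis to be infinitely frequent and, after pigeonholing, to funnel through a single basis vertex joined to the base of the tower by a path of bounded length inside $C$, which keeps the correction terms $o(m)$ against the length $m$ of the climb. The remaining estimates are routine bookkeeping with lengths and weights of paths, using the uniform bound $M$ and Lemma~\ref{lem:interval}.
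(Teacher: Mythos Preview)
Your proof is correct and follows the same overall strategy as the paper: show that $\Rot_{\Gamma(C)}$ coincides with the closed interval spanned by loop rotation numbers, then invoke Lemma~\ref{lem:interval}. The only difference is organizational. Where you split the inclusion $\Rot_{\Gamma(C)}\subseteq[a,b]$ into two cases (infinitely many basis visits versus eventually climbing a tower), the paper handles both at once with a single observation: for any $\bar\alpha\in\Gamma(C)$ and any $N$, strong connectedness gives a path in $C$ from $\alpha_N$ back to a fixed basis vertex $A\in C\cap\CP$, and the tower structure (Proposition~\ref{prop:structureG}) forces that path to begin by following $\bar\alpha$ itself until some $\alpha_n$ with $n\ge N$ has an arrow into $\CP\cap C$. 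This produces, uniformly along $\bar\alpha$, detours to $A$ of bounded length and weight, from which loops approximating $\rhos(\bar\alpha)$ are built directly---no case distinction needed. Your case~2 is precisely this argument specialized to the tower; your case~1 (direct loop decomposition at a recurring basis vertex) is a bit slicker for that situation but becomes redundant once the uniform argument is in place.
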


\begin{proof}
The set $C\cap \CP$ is non empty by 
Proposition~\ref{prop:structure-paths}(i), and thus we can
fix $A\in C\cap \CP$. 
For every $B,B'\in C\cap \CP$, we choose a path
$u(B,B')$ from $B$ to $B'$. Since $\CP$ is finite, we can bound $L(u(B,B'))$
and $W(u(B,B'))$ by some quantities $L_0$ and $W_0$ respectively, 
independently of $B,B'$.
Let $\CL_A$ be the set of all loops starting at $A$. If $\gamma\in \CL_A$, then
the periodic path $\widetilde\gamma$ belongs to $\Gamma(C)$.

Let $\bar\alpha=(\alpha_n)_{n\geq 0}\in\Gamma(C)$ 
such that $\rhos(\bar\alpha)$ exists.
We are going to show that 
\begin{equation}\label{eq:density}
\forall \eps>0, \exists \gamma\in\CL_A\text{ such that }
|\rhos(\bar\alpha)-\rhos(\widetilde\gamma)|<\eps.
\end{equation}
If $u$ is a path from $A$ to $\alpha_0$ then 
$u\cdot (\alpha_n)_{n\geq 0}\in\Gamma(C)$ and it 
has the same rotation number as $\bar\alpha$.
Thus we can assume that $\alpha_0=A$.

We first show that there exist a sequence of integers
$(n_i)_{i\geq 0}$ increasing to infinity,
and finite paths $v_i$ from $\alpha_{n_i}$ to $A$ such
that $\forall i\geq 0, W(v_i)\leq W$ and $L(v_i)\leq L$, where
\[
W:=W_0+\max\{p(B)\mid B\in \CP\}\quad\text{and}\quad L:=L_0+1.
\]
Since $\bar\alpha$ is in $\Gamma(C)$, for all integers 
$N\ge 0$, there exists a path in $C$ from $\alpha_N$
to $A$. Because of the structure of $\CG$ (see 
Proposition~\ref{prop:structureG}), 
this implies that there exists $n\geq N$ and $B\in C\cap \CP$ 
such that $\alpha_n\to B$. Thus we can find a sequence
$(n_i)_{i\geq 0}$ increasing to infinity
and vertices $(B_i)_{i\geq 0}$ in $C\cap \CP$ such that
$\alpha_{n_i}\to B_i$ for all $i\geq 0$. We set
$v_i:=\alpha_{n_i}B_i\cdot u(B_i,A)$.
Then $L(v_i)\leq L$ and $W(v_i)\leq W$.

Now we define $\gamma_i\in\CL_A$ by concatenating $\alpha_0\ldots \alpha_{n_i}$
with $v_i$. Fix $\eps>0$ and let $i$ be great enough such that
\[
\left|\rhos(\bar\alpha)-\frac{W(\alpha_0\ldots\alpha_{n_i})}{n_i}
\right|<\frac{\eps}{3},\qquad \frac{|W|}{n_i}<\frac{\eps}{3},\quad\text{and}
\quad
\left(|\rhos(\bar\alpha)|+\eps/3\right)\frac{L}{n_i}<\frac{\eps}{3}.
\]
Since $\rhos(\widetilde\gamma_i)=\frac{W(\alpha_0\ldots\alpha_{n_i})+
W(v_i)}{n_i+L(v_i)}$, we have
\begin{align*}
|\rhos(\bar\alpha)-\rhos(\widetilde\gamma_i)|
&\leq \left|\rhos(\bar\alpha)-\frac{W(\alpha_0\ldots\alpha_{n_i})}{n_i}
\right|
+\left|\frac{W(\alpha_0\ldots\alpha_{n_i})+W(v_i)}{n_i+L(v_i)}
-\frac{W(\alpha_0\ldots\alpha_{n_i})}{n_i}\right|\\
&< \frac{\eps}{3}+\left|\frac{W(v_i)}{n_i+L(v_i)}-
W(\alpha_0\ldots\alpha_{n_i})\frac{ L(v_i)}{n_i(n_i+L(v_i))}\right|\\
&<\frac{\eps}{3}+\frac{|W|}{n_i}+\frac{|W(\alpha_0\ldots\alpha_{n_i})|}{n_i}
\frac{L}{n_i}\\
&<\frac{\eps}{3}+\frac{\eps}{3}+\left(|\rhos(\bar\alpha)|+\eps/3\right)\frac{L}{n_i}\\
&<\eps
\end{align*}
This proves Equation~\eqref{eq:density}. In other words, 
$\{\rhos(\widetilde\gamma)\mid \gamma\in\CL_A\}$ is dense in 
$\Rot_{\Gamma(C)}$.

The set $\Rot_{\Gamma(C)}$ is non empty because
there exists a loop $\gamma$ starting at $A\in C\cap\CP\neq\emptyset$,
and $\rhos(\widetilde\gamma)$ exists.
We set $a:=\inf\Rot_{\Gamma(C)}$ and $b:=\sup \Rot_{\Gamma(C)}$.
We suppose $a<b$, otherwise there is nothing to prove.
Let $r\in (a,b)\cap\IQ$ and let $\eps>0$ be such that $a+2\eps<r<b-2\eps$. 
Let $\bar\alpha,\bar\beta\in \Gamma(C)$ be such that $|\rhos(\bar\alpha)-a|
<\eps$ and $|\rhos(\bar\beta)-b|<\eps$.
By  Equation~\eqref{eq:density}, there exist
$\gamma_1,\gamma_2\in \CL_A$ such that 
$|\rhos(\widetilde\gamma_1)-\rhos(\bar\alpha)|<\eps$
and $|\rhos(\widetilde\gamma_2)-\rhos(\bar\beta)|<\eps$, and hence 
\[
\rhos(\widetilde\gamma_1)<r<\rhos(\widetilde\gamma_2).
\]
Then by Lemma~\ref{lem:interval}(i)
there exists $\gamma\in\CL_A$ such that $\rhos(\widetilde\gamma)=r$.
Now let $s\in [a,b]$ and let $(r_n)_{n\ge 0}$ be a sequence in 
$(a,b)\cap \IQ$ such that $\lim_{n\to+\infty}
r_n=s$. What precedes implies that, for all  $n\ge 0$,
there exists $\gamma_n\in\CL_A$ such that $\rhos(\widetilde\gamma_n)=r_n$.
Then, according to Lemma~\ref{lem:interval}(ii), there exists an infinite
path $\bar\alpha$ such that $\rhos(\bar\alpha)=s$ and $\bar\alpha$
starts at $A$ and passes infinitely many times at $A$, which implies that
$\bar\alpha\in\Gamma(C)$. This ends the proof of the proposition.
\end{proof}

\subsection{The rotation numbers of infinite paths not in connected components}

\begin{proposition}\label{prop:rotJ}
Let \[
\CJ:=\{\bar\alpha=(\alpha_n)_{n\geq 0}\in\Gamma(\CG)\mid \exists N(\bar\alpha)
\ge 0, \forall n\ge N(\bar\alpha),\nexists B\in\CP, \alpha_n\to B\}.
\]
Let $\bar\alpha=(\alpha_n)_{n\geq 0}\in\CJ$ and let $(A_n)_{n\ge 0}$ be
the sequence of elements of $\CP$ such $\alpha_n=A_0\ldots A_n/\sim$ for all
$n\ge 0$. Then $\rhos(\bar\alpha)$ exists and is a rational number. More
precisely, there exist $q\ge 1$, $B_0,\cdots, B_{q-1}\in\CP$ and $M\ge
N(\bar\alpha)$ such that
\[
\forall n\ge 0, \forall r\in\Lbrack 0,q-1\Rbrack, A_{M+nq+r}=B_r
\]
and $\rhos(\bar\alpha)=\frac{p}{q}$, where 
$p:=p(B_0)+p(B_1)+\cdots+p(B_{q-1})$.
Moreover, there exists $x\in X^{\infty}$ such that $F^q(x)=x+p$, that is,
$x$ is periodic $\modi$ and $\rhos[F](x)=\frac pq=\rhos(\bar\alpha)$.
\end{proposition}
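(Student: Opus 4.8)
The plan is to first prove that the label sequence $(A_n)_{n\ge0}$ is eventually periodic, the period being forced by the combinatorics of the finite set $\Lambda$; everything else then follows by bookkeeping and positive covering. Set $N:=N(\bar\alpha)$. For $n\ge N$ the vertex $\alpha_n$ has no arrow into the basis $\CP$, while $\alpha_n\to\alpha_{n+1}$ is an arrow; by Proposition~\ref{prop:structureG}(i) this is therefore its only outgoing arrow and $\alpha_{n+1}\notin\CP$, so $H(\alpha_{n+1})>0$ and, by Proposition~\ref{prop:structureG}(ii), $H(\alpha_{n+1})=H(\alpha_n)+1$. Applying Lemma~\ref{lem:one-arrow} to $\alpha_n\to\alpha_{n+1}$ yields $k\in\Lbrack1,N_{\ell(A_n)}\Rbrack$ with $A_{n+1}=X^{\ell(A_n)}_k$ and, for every $j\in\Lbrack1,k-1\Rbrack$, an arrow $\alpha_n\to X^{\ell(A_n)}_j$ ending in the basis; since no such arrow exists, $k=1$, i.e.\ $A_{n+1}=X^{\ell(A_n)}_1$ for all $n\ge N$. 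Hence, for $n>N$, the interval $A_n$ depends only on $\ell(A_{n-1})$, so $\ell(A_n)=f(\ell(A_{n-1}))$ where $f\colon i\mapsto\ell(X^i_1)$ is a self-map of the finite set $\{i\in\Lambda\mid N_i\ge1\}$. An orbit of a self-map of a finite set is eventually periodic, so there are $q\ge1$ and $M\ge N$ with $\ell(A_{n+q})=\ell(A_n)$, and therefore $A_{n+q}=X^{\ell(A_{n+q-1})}_1=X^{\ell(A_{n-1})}_1=A_n$, for all $n\ge M$. Setting $B_r:=A_{M+r}$ gives the asserted relation $A_{M+nq+r}=B_r$.

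For the rotation number: the weight of the arrow $\alpha_i\to\alpha_{i+1}$ is $p(A_i)$ (by Lemma~\ref{lem:alpha-subset-A}, $A_i$ is the element of $\CP$ containing $\langle\alpha_i\rangle$), so $W(\alpha_0\ldots\alpha_n)=\sum_{i<n}p(A_i)$; as $(p(A_i))_i$ is $q$-periodic from index $M$ on with one period equal to $p(B_0),\ldots,p(B_{q-1})$, the Cesàro averages converge and $\rhos(\bar\alpha)=\frac{p}{q}$ with $p=\sum_{r=0}^{q-1}p(B_r)$. For the periodic point, Lemmas~\ref{lem:covering-in-graph} and~\ref{lem:concatenatecovering} applied along $\alpha_{M+nq}\to\alpha_{M+nq+1}\to\cdots\to\alpha_{M+(n+1)q}$ give, for every $n\ge0$, a chain of one-step positive coverings through the intervals $\langle\alpha_{M+nq+r}\rangle$ (with $\langle\alpha_{M+nq+r}\rangle\subset B_r$, indices mod $q$) of total translation $p$. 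Each such interval is a closed subinterval of $B_r$ with left endpoint $\min B_r$ (Lemma~\ref{lem:subintervalAn}(ii)), and the forward-image operation $J\mapsto F(J-p(B_r))\cap B_{r+1}$ is monotone for inclusion, so for each $r$ the sequence $(\langle\alpha_{M+nq+r}\rangle)_{n\ge0}$ is monotone for inclusion. If it is increasing, $\langle\alpha_{M+q}\rangle$ already positively $F^q$-covers itself up to translation by $p$; if decreasing, passing to $I_r:=\bigcap_n\langle\alpha_{M+nq+r}\rangle$ (nonempty closed subintervals of $B_r$, and positive covering is preserved under such monotone limits by compactness and continuity of $F$) yields a cyclic chain $I_0\pluscover{F}I_1+p(B_0)\pluscover{F}\cdots\pluscover{F}I_{q-1}+\bigl(p(B_0)+\cdots+p(B_{q-2})\bigr)\pluscover{F}I_0+p$. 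In either case Proposition~\ref{prop:+cover-periodic} produces $x\in I_0\subset B_0\subset X$ with $F^q(x)=x+p$ and $F^r(x)\in X+\IZ$ for all $r\in\Lbrack1,q-1\Rbrack$; by $q$-periodicity $F^n(x)\in X+\IZ$ for every $n\ge0$, so $x\in X^\infty$, and $F^q(x)=x+p$ gives $\rhos[F](x)=\frac{p}{q}=\rhos(\bar\alpha)$.

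The heart of the argument is the observation in the first paragraph: belonging to $\CJ$ — the eventual absence of arrows to the basis — forces the path to move to the \emph{first} subinterval $X^{\ell}_1$ at each step, which collapses the potentially infinite upward branch of $\CG$ onto the finite dynamics of $f$ on $\Lambda$, and this is precisely what makes the label sequence eventually periodic. The rest is routine; the only genuine technicality is the monotone-limit argument in the second paragraph, needed when the intervals $\langle\alpha_{M+nq}\rangle$ strictly shrink.
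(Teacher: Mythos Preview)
Your argument for the eventual periodicity of $(A_n)$ and for $\rhos(\bar\alpha)=p/q$ is the same as the paper's: Lemma~\ref{lem:one-arrow} forces $k=1$ once arrows to the basis are forbidden, so $A_{n+1}=X^{\ell(A_n)}_1$ is determined by $A_n$, and finiteness of $\CP$ gives the period.

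For the periodic $\modi$ point you take a genuinely different route. The paper does \emph{not} invoke Proposition~\ref{prop:+cover-periodic}; instead it fixes a point $y\in X^\infty$ with the periodic tail itinerary (via Proposition~\ref{prop:itinerary-G}), sets $G:=F^q-p$ and $a_0:=\min B_{q-1}$, and builds by hand an increasing sequence $a_i\in[a_{i-1},y]$ with $G^i(a_i)=a_0$, whose limit $x$ is the desired fixed point of $G$. Your approach stays inside the positive-covering machinery: you observe that the intervals $\langle\alpha_{M+nq+r}\rangle$ all share the left endpoint $\min B_r$ (Lemma~\ref{lem:subintervalAn}(ii)) and are therefore totally ordered by inclusion, and that the one-step map $J\mapsto F(J-p(B_r))\cap B_{r+1}$ is inclusion-monotone, so the sequence is monotone; in the increasing case a cyclic one-step chain already closes up, and in the decreasing case the intersections $I_r$ inherit the coverings by a compactness/continuity limit. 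This is correct and arguably more in keeping with the tools the paper has set up. Two small points worth tightening: (a) in the increasing case you should state the \emph{cyclic one-step} chain explicitly (e.g.\ $\langle\alpha_{M+q-1}\rangle\pluscover{F}\langle\alpha_{M+q}\rangle+p(B_{q-1})$ together with $\langle\alpha_M\rangle\subset\langle\alpha_{M+q}\rangle$ closes the loop through $\langle\alpha_{M}\rangle,\ldots,\langle\alpha_{M+q-1}\rangle$), since Proposition~\ref{prop:+cover-periodic} needs the intermediate intervals to place $F^r(x)$ in $X+\IZ$; merely saying ``$\langle\alpha_{M+q}\rangle$ positively $F^q$-covers itself'' is not enough for $x\in X^\infty$; (b) for the decreasing limit, the key observation is that the witnesses $y_n\in\langle\alpha_{M+nq+r}\rangle$ with $F(y_n)-p(B_r)=\max\langle\alpha_{M+nq+r+1}\rangle$ satisfy $y_n\le\max\langle\alpha_{M+nq+r}\rangle\searrow\max I_r$, so any accumulation point lies in $I_r$ and realises $\max I_{r+1}$ --- this is exactly your ``compactness and continuity'' remark, but it is the one place a reader might balk, so it deserves a sentence.
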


\begin{proof}
Let $n\ge N(\bar\alpha)$. By Lemma~\ref{lem:one-arrow}, there exists
$i\in\Lbrack 1,\ell(A_n)\Rbrack$ such that 
$A_{n+1}= X^{\ell(A_n)}_i$ and, if $i\ge 2$, then 
$\alpha\to X^{\ell(A_n)}_1$. By definition of $\CJ$ and choice of $n$, 
there is no arrow from $\alpha$ to some element of $\CP$.
Therefore $i=1$. This implies that, for all
$n\ge  N(\bar\alpha)$, $A_{n+1}$ is uniquely determined by $A_n$. Since
$\CP$ is finite, there exist $M\ge N(\bar\alpha)$ and $q\ge 1$ such that
$A_{M+q}=A_M$. If we set $B_0\ldots B_{q-1}:=A_M\ldots A_{M+q-1}$, we get
\[
\forall n\ge 0, \forall r\in\Lbrack 0,q-1\Rbrack, A_{M+nq+r}=B_r
\]
Then, if we set $p:=p(B_0)+p(B_1)+\cdots+p(B_{q-1})$, it is clear
that $\rhos(\bar\alpha)=\frac pq$.
This proves the first part of the proposition.

For all $n\ge 1$, we set
\[
\beta_n:=\underbrace{B_0\ldots B_{q-1}\ \cdots\ B_0\ldots B_{q-1}}_{
B_0\ldots B_{q-1} \text{ repeated $n$ times}} /\sim.
\]
Then $\beta_n=A_M\ldots A_{M+nq-1}/\sim$. Since
$\langle A_M\ldots A_{M+m}\rangle \pluscover{F} 
\langle A_M\ldots A_{M+m+1}\rangle+p(A_{M+m})$ for all $m\ge 0$, 
Lemma~\ref{lem:concatenatecovering} implies that
$\langle \beta_n\rangle\pluscover{F^q}\langle \beta_{n+1}\rangle+p$
for all $n\ge 1$.
We set $a_0:=\min B_{q-1}$. Then $a_0=\min \langle\beta_n\rangle$ 
for all $n\ge 1$ by Lemma~\ref{lem:subintervalAn}(ii) and
$F(a_0)\in\TR$ by Lemma~\ref{lem:basic-partition}(iii).
Let $\ell\in\Lambda$ be such that $B_{q-1}\subset X^\ell$.
By Proposition~\ref{prop:itinerary-G}, there exists $y'\in X^{\infty}$ 
of itinerary $(A_{m})_{m\ge 0}$. Thus the itinerary of $y:=F^{M+q-1}(y')$ 
is $(A_{M+q-1+m})_{m\ge 0}$.
Let $G:=F^q-p$. It is clear that $G(\TR)\subset \TR$.
For all $n\ge 0$, $G^n(y)\in B_{q-1}\subset X^\ell$, and in particular $G^n(y)\ge a_0$ for the order in $X^\ell$.
We define inductively  a sequence of points $(a_i)_{i\geq 1}$ such
that $a_i\in [a_{i-1},y]$ and $G^i(a_i)=a_0$ for all $i\geq 1$.
\begin{itemize}
\item Since $G(a_0)\in\TR$ and $G(y)\ge a_0$, we have 
$a_0\in G([a_0,y])$ by continuity. Thus there exists $a_1\in [a_0,y]$ such that
$G(a_1)=a_0$. 
\item Assume that $a_0,\ldots, a_i$ are already defined. 
Since $G^{i+1}(a_i)=G(a_0)\in\TR$ and $G^{i+1}(y)\geq a_0$, the point $a_0$
belongs to $G^{i+1}([a_i,y])$ by continuity. Thus there exists $a_{i+1}
\in [a_i,y]$ such that $G^{i+1}(a_{i+1})=a_0$. 
This concludes the construction of $a_{i+1}$.
\end{itemize}
The sequence $(a_i)_{i\geq 0}$ is non decreasing and contained in the
compact interval $X^\ell$. Therefore $x:=\lim_{i\to+\infty}a_i$ exists
and belongs to $X^\ell$.
Since $G(a_{i+1})=a_i$, we get that $G(x)=x$.
In other words, $F^q(x)=x+p$. Thus $F^{nq}(x)\in X$ for all $n\ge 0$,
which implies that $x\in X^{\infty}$. Clearly, $x$ is periodic $\modi$
and $\rhos[F](x)=\frac pq$.
\end{proof}

\subsection{The rotation set of $\Gamma(\CG)$}

Proposition~\ref{prop:structure-paths} implies that the rotation set
of $\Gamma(\CG)$ can be decomposed as the finite union of the
rotation sets of the connected components plus a finite set
(see Equation~\eqref{eq:decomposition-RotG} in the proof below). 
This decomposition, together
with the study of the rotation set of a connected component in the previous
subsection, leads to the following theorem.

\begin{theorem}\label{theo:RotG}
The set $\Rot_{\Gamma(\CG)}$ is compact and has finitely many connected 
components.

There exists a finite set $D$ such that, for every rational number $p/q$
in $\Rot_{\Gamma(\CG)}\setminus D$, there exists a loop $\gamma$ in $\CG$
such that $\rho(\widetilde\gamma)=p/q$.
\end{theorem}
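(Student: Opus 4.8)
The plan is to combine Proposition~\ref{prop:structure-paths}(iii) with the results of the two preceding subsections. First I would observe that every infinite path in $\CG$ falls into one of two categories: either it ultimately stays in some connected component $C$, or it ultimately coincides with some path in the finite set $\CI$ (Proposition~\ref{prop:structure-paths}(iii)). Since rotation numbers depend only on the asymptotic behaviour of the weight sequence (the first finitely many terms contribute $0$ in the limit), for a path $\bar\alpha$ that ultimately coincides with $\bar\beta\in\CI$ we have $\rho(\bar\alpha)=\rho(\bar\beta)$ whenever either limit exists. Now $\CI\subset\CJ$ (where $\CJ$ is as in Proposition~\ref{prop:rotJ}): indeed, if $(\alpha_n)_{n\ge0}\in\CI$ then $H(\alpha_n)=n>0$ for all $n\ge1$, so by Proposition~\ref{prop:structureG}(i)–(ii) there is no arrow from $\alpha_n$ to the basis for $n\ge1$. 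Hence by Proposition~\ref{prop:rotJ}, $\rho(\bar\beta)$ exists and is rational for every $\bar\beta\in\CI$, and there are at most $\#\CI\le\#\CP$ such values. This gives the decomposition
\begin{equation}\label{eq:decomposition-RotG}
\Rot_{\Gamma(\CG)}=\left(\bigcup_{C}\Rot_{\Gamma(C)}\right)\cup D_0,
\end{equation}
where the union is over the connected components $C$ of $\CG$ (finitely many, by Proposition~\ref{prop:structure-paths}(i)) and $D_0:=\{\rho(\bar\beta)\mid\bar\beta\in\CI\}$ is a finite set of rationals.

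Next I would invoke Proposition~\ref{prop:RotC}(i): each $\Rot_{\Gamma(C)}$ is a non-empty compact interval. So $\Rot_{\Gamma(\CG)}$ is a finite union of compact intervals together with finitely many extra points, hence it is compact and has finitely many connected components. This proves the first assertion of the theorem.

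For the second assertion, I would set $D:=D_0\cup\bigcup_{C}\partial\,\Rot_{\Gamma(C)}$, the union of $D_0$ with the (finitely many) endpoints of the intervals $\Rot_{\Gamma(C)}$; this is a finite set. Now suppose $p/q\in\Rot_{\Gamma(\CG)}\setminus D$. By \eqref{eq:decomposition-RotG} and since $p/q\notin D_0$, there is a connected component $C$ with $p/q\in\Rot_{\Gamma(C)}$; and since $p/q$ is not an endpoint of this interval, $p/q\in\Int{\Rot_{\Gamma(C)}}\cap\IQ$. Proposition~\ref{prop:RotC}(ii) then yields a loop $\gamma$ in $C$ (hence in $\CG$) with $\rho(\widetilde\gamma)=p/q$, as required.

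The only mildly delicate points are the bookkeeping ones, and I expect the main obstacle — such as it is — to be verifying cleanly that $\CI\subseteq\CJ$ so that Proposition~\ref{prop:rotJ} applies to members of $\CI$, and that rotation numbers are genuinely unaffected by a finite prefix (so that "ultimately coincides with a path in $\CI$" transfers the rotation number). Both are routine: the prefix-invariance is immediate from the definition of $\rho(\bar\alpha)$ as a Cesàro limit of weights, and the inclusion $\CI\subseteq\CJ$ follows because a path in $\CI$ has strictly positive height at every index $n\ge1$, so Proposition~\ref{prop:structureG} forbids any arrow from $\alpha_n$ ($n\ge1$) into the basis $\CP$.
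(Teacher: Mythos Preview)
Your overall approach matches the paper's: decompose $\Rot_{\Gamma(\CG)}$ via Proposition~\ref{prop:structure-paths}(iii) as a union over the finitely many connected components together with a finite leftover coming from $\CI$, apply Proposition~\ref{prop:RotC} to each component, and take $D$ to be the endpoints of those intervals together with the leftover set. The prefix-invariance of $\rho$ and the passage to loops via Proposition~\ref{prop:RotC}(ii) are handled exactly as in the paper.

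There is, however, a genuine error in your justification of $\CI\subseteq\CJ$. You read Proposition~\ref{prop:structureG} backwards: part~(i) says that all but at most one arrows from a vertex end \emph{in} the basis---it does not forbid arrows to the basis from vertices of positive height. For $(\alpha_n)_{n\ge0}\in\CI$ the path merely \emph{selects}, at each step, the unique non-basis successor $\alpha_{n+1}$; the vertex $\alpha_n$ can still have additional outgoing arrows into $\CP$. Concretely, by Lemma~\ref{lem:one-arrow}, if the $(n{+}1)$-st letter is $A_{n+1}=X^{\ell(A_n)}_k$ with $k\ge 2$, then $\alpha_n\to X^{\ell(A_n)}_j$ is an arrow in $\CG$ for every $j<k$. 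Hence $\CI\subseteq\CJ$ fails in general, and Proposition~\ref{prop:rotJ} cannot be invoked for an arbitrary $\bar\beta\in\CI$.

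Fortunately you do not need that inclusion at all. The set
\[
D_0:=\{\rho(\bar\beta)\mid \bar\beta\in\CI,\ \rho(\bar\beta)\text{ exists}\}
\]
is finite simply because $\#\CI\le\#\CP<\infty$ (Proposition~\ref{prop:structure-paths}(ii)); whether each $\rho(\bar\beta)$ exists or is rational is immaterial for the statement of the theorem. Drop the claim $\CI\subseteq\CJ$ and the appeal to Proposition~\ref{prop:rotJ}, keep everything else, and your argument is complete. (The paper phrases the leftover as $\Rot_{\CI\cap\CJ}$ rather than asserting $\CI\subseteq\CJ$, but the operative fact in both versions is just that $\CI$ is finite.)
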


\begin{proof}
Let $\CC$ be the
set of connected components of $\CG$. Let $\CI$ and $\CJ$ be the sets
defined in Proposition \ref{prop:structure-paths} and \ref{prop:rotJ}
respectively. Let $\bar\alpha=(\alpha_n)_{n\geq 0}\in\Gamma(\CG)$ 
such that $\rhos(\bar\alpha)$ exists.
By Proposition~\ref{prop:structure-paths}(iii):

\noindent $\bullet$
Either there exist a connected component $C\in\CC$ and an integer 
$N$ such that, $\forall n\geq N, \alpha_n\in C$. In this case,
$\rho(\bar\alpha)\in \Rot_{\Gamma(C)}$.

\noindent $\bullet$
Or there exist $\bar\beta=(\beta_n)_{n\geq 0}\in\CI\cap \CJ$ and integers $N,M$
such that, $\forall n\geq 0$, $\alpha_{N+n}=\beta_{M+n}$. Thus
$\rho(\bar\beta)$ exists and is a rational number (Proposition~\ref{prop:rotJ})
and is equal to $\rho(\bar\alpha)$, and so
$\rho(\bar\alpha)\in\Rot_{\CI\cap \CJ}$.

Consequently,
\begin{equation}\label{eq:decomposition-RotG}
\Rot_{\Gamma(\CG)}=\bigcup_{C\in\CC}\Rot_{\Gamma(C)}\cup\Rot_{\CI\cap \CJ}.
\end{equation}
The sets $\CC$ and $\CI$ are finite by 
Proposition~\ref{prop:structure-paths}(i)-(ii),
and thus $\Rot_{\CI\cap \CJ}$ is finite. Moreover, for every $C\in\CC$, 
$\Rot_{\Gamma(C)}$ is a compact interval by Proposition~\ref{prop:RotC}(i).
Therefore $\Rot_{\Gamma(\CG)}$ is a finite union of compact intervals
(some intervals may be reduced to a single point). This proves the first
point of the theorem.

According to Proposition~\ref{prop:RotC}(ii), for every rational number
$p/q\in \bigcup_{C\in\CC}\Int{\Rot_{\Gamma(C)}}$, there exists a loop $\gamma$
such that $\rho(\widetilde\gamma)=p/q$. Since the set $D:=\bigcup_{C\in\CC}
\partial \Rot_{\Gamma(C)}\cup \Rot_{\CI\cap\CJ}$ is finite, this implies the
second point of the theorem.
\end{proof}

\begin{remark}
\begin{itemize}
\item The intervals $(\Rot_{\Gamma(C)})_{C\in\CC}$ 
may not be disjoint.
\item The number of connected components of $\Rot_{\Gamma(\CG)}$ is at most
$2\#\CP$ and at most $\#\CP$ are not reduced to a single point. This is because
both $\#\CC$ and $\#\CI$ are bounded by $\#\CP$.
\end{itemize}
\end{remark}

\section{The rotation set of a sun-like map}

Now that we have studied the rotation set of the covering graph, 
we are ready to prove that the rotation set of a sun-like map
is composed of finitely many compact intervals, and that all but finitely
many rational numbers in the rotation set are rotation numbers of some
periodic {\modi} points.

According to Equation~\eqref{eq:decomposition-RotF}, the rotation set
of $F$ has the following decomposition:
\[
\Rot(F)=\Rot_{\TR}(F)\cup\Rot_{X^\infty}(F).
\]

First, we pull back the results from $\Rot_{\Gamma(\CG)}$ to $\Rot_{X^\infty}(F)$.

\begin{proposition}\label{prop:RotG-RotF}
Let $F\in\sunny$ and $\CG$ its covering graph. Then
$\Rot_{X^\infty}(F)=\Rot_{\Gamma(\CG)}$. 

If $\gamma$ is a loop in $\CG$, then $\rho(\widetilde\gamma)\in\IQ$ and
there exists a periodic {\modi} point $x\in X^{\infty}$ such that 
$\rho_F(x)=\rho(\widetilde\gamma)$. If $\bar\alpha\in\CJ$, then 
$\rho(\bar\alpha)\in\IQ$ and
there exists a periodic {\modi} point $x\in X^{\infty}$ such that 
$\rho_F(x)=\rho(\bar\alpha)$
\end{proposition}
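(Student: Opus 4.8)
The plan is to establish the three assertions of Proposition~\ref{prop:RotG-RotF} by transporting, via Proposition~\ref{prop:itinerary-G}, the correspondence between itineraries of points of $X^\infty$ and infinite paths in $\CG$. First I would check that this correspondence is compatible with rotation numbers: if $x\in X^\infty$ has itinerary $(A_n)_{n\ge 0}$ and $\bar\alpha=(\alpha_n)_{n\ge 0}$ is the associated path (with $\alpha_n=A_0\ldots A_n/\sim$), then by the definition of the weight $W$, we have $W(\alpha_0\ldots\alpha_n)=p(A_0)+\cdots+p(A_{n-1})$, since by Lemma~\ref{lem:alpha-subset-A} the unique $A\in\CP$ with $\langle\alpha_i\rangle\subset A$ is $A_i$, so $W(\alpha_i\alpha_{i+1})=p(A_i)$. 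Comparing this with Lemma~\ref{lem:itinerary-rho}, the partial sums defining $\rho_F(x)$ and $\rho(\bar\alpha)$ coincide term by term; hence one limit exists iff the other does, and they are then equal. This immediately gives $\Rot_{X^\infty}(F)\subset\Rot_{\Gamma(\CG)}$ using the first half of Proposition~\ref{prop:itinerary-G}, and conversely, given an infinite path $\bar\alpha$ with $\rho(\bar\alpha)$ existing, the second half of Proposition~\ref{prop:itinerary-G} produces $x\in X^\infty$ whose itinerary generates (a tail of) $\bar\alpha$; a small care is needed here because if $H(\alpha_0)=k>0$ then the point $x$ has itinerary $(A_n)_{n\ge 0}$ with $\alpha_n=A_0\ldots A_{n+k}/\sim$, so $\bar\alpha$ corresponds not to $x$ itself but to $F^{?}$-shifted data — however, since rotation numbers are invariant under shifting the itinerary (the Cesàro limit is unchanged by dropping finitely many initial terms), $\rho_F(x)=\rho(\bar\alpha)$ still holds. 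This yields the reverse inclusion and the equality $\Rot_{X^\infty}(F)=\Rot_{\Gamma(\CG)}$.

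For the second assertion, let $\gamma$ be a loop in $\CG$. That $\rho(\widetilde\gamma)\in\IQ$ is already recorded in the lemma following the definition of concatenation, namely $\rho(\widetilde\gamma)=W(\gamma)/L(\gamma)$. To produce a periodic point, I would write $\gamma=\alpha_0\alpha_1\ldots\alpha_q$ with $\alpha_q=\alpha_0$, let $A\in\CP$ with $\langle\alpha_0\rangle\subset A$, and apply Lemma~\ref{lem:covering-in-graph} along each arrow of $\gamma$ to obtain a chain of positive $F$-coverings $\langle\alpha_0\rangle\pluscover{F}\langle\alpha_1\rangle+p_1\pluscover{F}\cdots\pluscover{F}\langle\alpha_q\rangle+p_q$, where the $p_i$ are the appropriate weights and $\langle\alpha_q\rangle=\langle\alpha_0\rangle$ since $\alpha_q=\alpha_0$. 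Each $\langle\alpha_i\rangle$ is a compact interval contained in some branch of $F$ by Lemma~\ref{lem:subintervalAn}(ii). Then Proposition~\ref{prop:+cover-periodic} gives a point $x_0\in\langle\alpha_0\rangle$ with $F^q(x_0)=x_0+p_q$ and $F^i(x_0)\in\langle\alpha_i\rangle+(p_1+\cdots+p_i)\subset X+\IZ$ for all $i$. Since $F^q(x_0)=x_0+p_q$ with $p_q=W(\gamma)$, iterating gives $F^{nq}(x_0)\in X+\IZ$ for all $n$, so in fact $x_0\in X^\infty$, $x_0$ is periodic $\modi$, and by the Remark after the definition of rotation number, $\rho_F(x_0)=p_q/q=W(\gamma)/L(\gamma)=\rho(\widetilde\gamma)$.

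For the third assertion, if $\bar\alpha\in\CJ$ then Proposition~\ref{prop:rotJ} already does essentially all the work: it asserts $\rho(\bar\alpha)\in\IQ$ and, more precisely, constructs a point $x\in X^\infty$ with $F^q(x)=x+p$ and $\rho_F(x)=p/q=\rho(\bar\alpha)$. So this part is just a matter of invoking that proposition. The main obstacle, to the extent there is one, is the bookkeeping in the first part: making sure the shift-by-$H(\alpha_0)$ discrepancy in the backward direction of Proposition~\ref{prop:itinerary-G} does not corrupt the identity of rotation numbers, and checking that the ``weight equals $p(A_i)$'' identification is exactly what makes the two Cesàro averages in Lemma~\ref{lem:itinerary-rho} and the definition of $\rho(\bar\alpha)$ literally the same sequence. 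Once that alignment is pinned down, everything else follows by directly quoting Lemma~\ref{lem:covering-in-graph}, Proposition~\ref{prop:+cover-periodic}, and Proposition~\ref{prop:rotJ}.
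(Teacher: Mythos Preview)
Your proposal is correct and follows essentially the same approach as the paper's proof: the first assertion comes from combining Lemma~\ref{lem:itinerary-rho}, Proposition~\ref{prop:itinerary-G}, and the definition of weights; the second from chaining Lemma~\ref{lem:covering-in-graph} along the loop and applying Proposition~\ref{prop:+cover-periodic}; the third by direct appeal to Proposition~\ref{prop:rotJ}. You are in fact slightly more careful than the paper in spelling out the weight identification $W(\alpha_i\alpha_{i+1})=p(A_i)$ and in noting that the shift by $H(\alpha_0)$ in the reverse direction of Proposition~\ref{prop:itinerary-G} is harmless for rotation numbers.
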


\begin{proof}
If we put together Lemma~\ref{lem:itinerary-rho}, 
Proposition~\ref{prop:itinerary-G}
and the definition of the rotation numbers in $\Gamma(\CG)$, then it is
clear that
\[
\exists x\in X^\infty\text{ such that }\rho_F(x)=r\Longleftrightarrow
\exists \bar\alpha\in\Gamma(\CG)\text{ such that }\rho(\bar\alpha)=r.
\]
Hence $\Rot_{X^\infty}(F)=\Rot_{\Gamma(\CG)}$.

Suppose that $\gamma=\alpha_0\ldots\alpha_n$ is a loop in $\CG$.
According to Lemma~\ref{lem:covering-in-graph}, we have the following
chain of positive coverings:
\[
\langle\alpha_0\rangle\pluscover{F}\langle\alpha_1\rangle+W(\alpha_0\alpha_1)
\pluscover{F}\langle\alpha_2\rangle+W(\alpha_0\alpha_1\alpha_2)
\cdots\pluscover{F}\langle\alpha_n\rangle+W(\alpha_0\ldots\alpha_n).
\]
Since $\alpha_0=\alpha_n$, Proposition~\ref{prop:+cover-periodic}
implies that there exists $x\in \langle \alpha_0\rangle$ such that
$F^n(x)=x+W(\alpha_0\ldots\alpha_n)$ and $F^i(x)\in \langle\alpha_i\rangle
+W(\alpha_0\ldots\alpha_i)$ for all $i\in\Lbrack 1, n-1\Rbrack$. 
This implies that $x$ is a periodic {\modi} point, $x\in X^\infty$ 
because $\langle\alpha_i\rangle\subset X$ for all $i\in\Lbrack 0,n\Rbrack$, and
\[
\rho_F(x)=\frac{W(\alpha_0\ldots\alpha_n)}{n}=\frac{W(\gamma)}{L(\gamma)}
=\rho(\widetilde\gamma).
\]
If $\bar\alpha\in\CJ$, the conclusion is given by Proposition~\ref{prop:rotJ}.
\end{proof}

\medskip
The main result of this paper is now a mere consequence of
Proposition~\ref{prop:RotG-RotF}, Proposition~\ref{prop:rotJ},
Theorem~\ref{theo:RotG} and Theorem~\ref{theo:RotR}.

\begin{theorem}
Let $F\in\sunny$. Then
$\Rot(F)$ is a nonempty compact set and has finitely many connected 
components. 
There exists a finite set $E$ such that, for every rational number $p/q$
in $\Rot(F)\setminus E$, there exists a periodic {\modi} point
$x\in T$  such that $\rho_F(x)=p/q$.
More precisely, $\# E\le 2\#\CP$ and $E\subset \bigcup_{C\in\CC}
\partial \Rot(\Gamma(C))$, where $\CC$ is the set of connected components of
$\CG$.
\end{theorem}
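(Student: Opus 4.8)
The plan is to assemble the final theorem directly from the pieces already established, using the basic decomposition $\Rot(F)=\Rot_{\TR}(F)\cup\Rot_{X^\infty}(F)$ from Equation~\eqref{eq:decomposition-RotF}. By Theorem~\ref{theo:RotR}, $\Rot_{\TR}(F)$ is a nonempty compact interval, and every rational point of it is the rotation number of a periodic $\modi$ point in $\TR$. By Proposition~\ref{prop:RotG-RotF}, $\Rot_{X^\infty}(F)=\Rot_{\Gamma(\CG)}$, and by Theorem~\ref{theo:RotG} this set is compact with finitely many connected components. Hence $\Rot(F)$ is the union of two sets, each compact with finitely many connected components; so $\Rot(F)$ is compact (a finite union of compact sets) and has finitely many connected components (the union of finitely many intervals is again a finite union of intervals). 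This handles the first assertion with essentially no work.

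For the second assertion I would take $E:=\bigcup_{C\in\CC}\partial\Rot(\Gamma(C))$, which is finite since $\#\CC\le\#\CP$ by Proposition~\ref{prop:structure-paths}(i) and each $\Rot(\Gamma(C))$ is an interval with a two-point (or one-point) boundary; thus $\#E\le 2\#\CP$. Now fix a rational $p/q\in\Rot(F)\setminus E$. If $p/q\in\Rot_{\TR}(F)$, Theorem~\ref{theo:RotR} already supplies a periodic $\modi$ point $x\in\TR\subset T$ with $\rho_F(x)=p/q$. Otherwise $p/q\in\Rot_{X^\infty}(F)=\Rot_{\Gamma(\CG)}$, and by the decomposition \eqref{eq:decomposition-RotG} we have $p/q\in\bigcup_{C\in\CC}\Rot_{\Gamma(C)}\cup\Rot_{\CI\cap\CJ}$. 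In the first case, since $p/q\notin E$ it lies in $\Int\Rot(\Gamma(C))$ for some $C$, so Proposition~\ref{prop:RotC}(ii) gives a loop $\gamma$ in $C$ with $\rho(\widetilde\gamma)=p/q$, and Proposition~\ref{prop:RotG-RotF} converts this loop into a periodic $\modi$ point $x\in X^\infty\subset T$ with $\rho_F(x)=p/q$. In the second case, $p/q=\rho(\bar\alpha)$ for some $\bar\alpha\in\CI\cap\CJ\subset\CJ$, and Proposition~\ref{prop:rotJ} (or the last clause of Proposition~\ref{prop:RotG-RotF}) directly produces a periodic $\modi$ point $x\in X^\infty$ with that rotation number.

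There is no genuine obstacle here: the theorem is a bookkeeping consolidation, and all the substantive work lives in the earlier propositions. The one point requiring a little care is making sure the exceptional set is correctly identified: we must exclude only the boundaries $\partial\Rot(\Gamma(C))$ of the connected components of the \emph{covering graph}, since on these boundary points Proposition~\ref{prop:RotC}(ii) does not apply; but rationals in $\Rot_{\TR}(F)$ (including its endpoints) and rationals in $\Rot_{\CI\cap\CJ}$ need \emph{not} be excluded, because Theorems~\ref{theo:RotR} and Proposition~\ref{prop:rotJ} respectively cover those cases unconditionally. So defining $E=\bigcup_{C\in\CC}\partial\Rot(\Gamma(C))$ and verifying $\#E\le 2\#\CP$ via $\#\CC\le\#\CP$ is the only detail worth writing out explicitly; everything else is a direct citation of the results above.
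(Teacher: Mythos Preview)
Your proposal is correct and follows exactly the paper's approach: the paper's own proof is the single sentence ``a mere consequence of Proposition~\ref{prop:RotG-RotF}, Proposition~\ref{prop:rotJ}, Theorem~\ref{theo:RotG} and Theorem~\ref{theo:RotR},'' and you have unpacked this citation chain accurately, including the correct observation that rationals in $\Rot_{\CI\cap\CJ}$ need not be put into $E$ because Proposition~\ref{prop:rotJ} supplies periodic points for them directly.
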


We conjecture that the exceptional set $E$ is empty. This conjecture is
true for the graph $\sigma$ \cite{R9}. In the general case of a
degree-one map on a graph with a unique loop, it is not known if the
rotation set is closed, if the number of connected components is finite or
if (almost) all rational rotation numbers are rotation numbers of periodic
points.

\bibliographystyle{plain}

\bigskip
\noindent
Laboratoire de Math\'ematiques d'Orsay,
CNRS UMR 8628, B\^atiment 307,
Universit\'e Paris-Sud 11,
91405 Orsay cedex, France. \texttt{<Sylvie.Ruette@math.u-psud.fr>}

\end{document}